\newtheorem{theorem}{Theorem}
\newtheorem{corollary}[theorem]{Corollary}
\newtheorem{definition}[theorem]{Definition}
\newtheorem{lemma}[theorem]{Lemma}
\newtheorem{remark}{Remark}
\newenvironment{proof}[1][Proof]{\textbf{#1.} }{\ \rule{0.5em}{0.5em}}
\def\G{\Gamma}
\def\g{\gamma}
\def\i{\hat{\imath}}
\begin{document}

\title{Rigid gems in dimension n.
\footnote{Work performed under the auspicies of the G.N.S.A.G.A. of
the I.N.D.A.M. (Istituto Nazionale di Alta Matematica "F. Severi" of
Italy) and financially supported by MiUR of Italy (project
``Propriet\`a geometriche delle variet\`a reali e complesse'') and
by the University of Modena and Reggio Emilia (project ``Modelli
discreti di strutture algebriche e geometriche''). }}

\author{Paola Bandieri \ and \ Carlo Gagliardi}

\maketitle

\begin{abstract}
We extend to dimension $n \geq 3$ the concept of $\rho$-pair in a
coloured graph and we prove the existence theorem for minimal rigid
crystallizations of handle-free, closed $n$-manifolds.
\\
\\{\it 2000 Mathematics Subject Classification:} Primary 57Q15;
Secondary  57M15 .\\{\it Keywords:} rigid gems, coloured graphs,
$\rho$-pairs.

\end{abstract}

\section{Introduction}

The concept of \textit{$\rho$-pair} in a $4$-coloured graph was
introduced for the first time by Sostenes Lins in \cite{L}. Roughly
speaking, it consists of two equally coloured edges, which belong to
two or three bicoloured cycles. A graph with no $\rho$-pairs was
then called \textit{rigid} in the same paper, where the following
basic result was proved:

\textit{Every handle-free, closed $3$-manifold admits a rigid
crystallization of minimal order.}

The proof is based on the definition of a particular local move,
called \textit{switching} of a $\rho$-pair. Starting from any gem
$\G$ of a closed, irreducible $3$-manifold $M$, a finite sequence of
such moves, together with the cancelling of a suitable number of
$1$-dipoles, produces a rigid crystallization $\G'$ of the same
manifold $M$, whose order is strictly less than the order of $\G$.

The above existence theorem plays a fundamental r\^ole in the
problem of generating automatically essential catalogues of
$3$-manifolds, with "small" Heegaard genus and/or graph order (see,
e.g., \cite{L}, \cite{BCrG_1}, \cite{CC}, \cite{BCrG_2}, \cite{M}).

In the present paper, we extend the concepts of $\rho$-pair,
switching and rigidity to $(n+1)$-coloured graphs, for $n > 3$.

Our main result is the proof of the existence of a rigid
crystallization of minimal order, for every handle-free
$n$-dimensional, closed manifold. It will be used in a subsequent paper to generate the catalogue of
all $4$-dimensional, closed manifolds, represented by (rigid)
crystallizations of "small" order.

\section{Notations}

In the following all manifolds will be piecewise linear (PL), closed
and, when not otherwise stated, connected. For the basic notions of
PL topology, we refer to \cite{RS} and to \cite{Gl}; "$\cong$" will
mean "\textit{PL-homeomorphic}". For graph theory, see \cite{GT} and
\cite{W}.

We will use the term "graph" instead of  "multigraph". Hence
multiple edges are allowed, but loops are forbidden. As usual,
$V(\G)$ and $E(\G)$ will denote the vertex-set and the edge-set of
the graph $\G$.

If $\G$ is an oriented graph, then each edge ${\bf e}$ is directed
from its first endpoint ${\bf e}(0)$ (also called \textit{tail}) to
its second endpoint ${\bf e}(1)$ (called \textit{head}).

An \textit{$(n+1)$-coloured graph} is a pair $(\G,\gamma)$, where
$\G$ is a graph, regular of degree $n+1$, and $\gamma :
E(\G)\to\Delta_n=\{0,\ldots,n\}$ is a map with the property that, if
${\bf e}$ and ${\bf f}$ are adjacent edges of $E(\G)$, then $\gamma
(\bf e) \neq \gamma (\bf f)$. We shall often write $\G$ instead of
$(\G,\gamma)$.

Let $B$ be a subset of $\Delta_n$. Then, the connected components of
the  graph $\G_B=(V(\G),\gamma^{-1}(B))$ are called
$B$\textit{-residues} of $(\G,\gamma)$. Moreover, for each
$c\in\Delta_n$, we set $\hat{c}=\Delta_n\setminus\{c\}$. If $B$ is a
subset of $\Delta_n$, we define $g_B$ to be the number of
$B$-residues of $\G$; in particular, given any colour $c\in
\Delta_n$, $g_{\hat c}$ denotes the number of components of the
graph $\G_{\hat c}$, obtained by deleting all edges coloured $c$
from $\G$. If $i,j \in \Delta_n, i \neq j$, then $g_{ij}$ denotes
the number of cycles of $\G$, alternatively coloured $i$ and $j$,
i.e. $g_{ij}= g_{\{i,j\}}$.

An isomorphism $\phi : \G\to\G'$ is called a \textit{coloured
isomorphism} between the $(n+1)$-coloured graphs $(\G,\gamma)$ and
$(\G',\gamma')$ if there exists a permutation $\varphi$ of
$\Delta_n$ such that $\varphi\circ\gamma=\gamma'\circ\phi$.

\medskip

A pseudocomplex $K$ of dimension $n$ \cite{HW} with a labelling on
its vertices by $\Delta_n=\{0,\ldots,n\}$, which is injective on the
vertex-set of each simplex of $K$ is called a \textit{coloured
$n$-complex} .

\medskip

It is easy to associate a coloured $n$-complex $K(\G)$ to each
$(n+1)$-coloured graph $\G$, as follows:
\begin{itemize}
\item [-] for each vertex ${\bf v}$ of $\G$, take an $n$-simplex $\sigma ({\bf v})$ and label
its vertices by $\Delta_n$;
\item [-] if ${\bf v}$ and ${\bf w}$ are vertices of $\G$ joined by a $c$-coloured
edge ($c\in\Delta_n$), then identify the $(n-1)$-faces of $\sigma
({\bf v})$ and $\sigma ({\bf w})$ opposite to the vertices labelled
$c$.
\end{itemize}

If $M$ is a manifold of dimension $n$ and $\G$ an $(n+1)$-coloured
graph such that $|K(\G)|\cong M$, then, following Lins \cite{L}, we
say that $\Gamma$ is a \textit{gem}
(\textit{graph-encoded-manifold}) \textit{representing} $M.$

Note that \textit{ $\G$ is a gem of an $n$-manifold $M$ iff, for
every colour $c \in \Delta_n$, each $\hat c$-residue represents
$\mathbb S^{n-1}.$} Moreover, \textit{$M$ is orientable iff $\G$ is
bipartite.}

If, for each $c\in\Delta_n$, $\Gamma_{\hat{c}}$ is connected , then
the corresponding coloured complex $K(\Gamma)$ has exactly $(n+1)$
vertices (one for each colour $c\in\Delta_n$); in this case both
$\G$ and $K(\Gamma)$ are called \textit{contracted}. A contracted
gem $\G$, representing an $n$-manifold $M$, is called a
\textit{crystallization} of $M$.

The \textit{existence theorem} of crystallizations for every
$n$-manifold $M$ was proved by Pezzana \cite{P$_1$}, \cite{P$_2$}.
Surveys on crystallizations theory can be found in \cite{FGG},
\cite{BCaG}.

Let ${\bf x},{\bf y}$ be two  vertices of an $(n+1)$-coloured graph
$\G$ joined by $k$ edges $\{{\bf e}_1,\ldots, {\bf e}_k\}$ with
$\g({\bf e}_h)=i_h$, for $h = 1, \ldots, k$. We call $\Theta=\{{\bf
x},{\bf y}\}$ a \textit{dipole of type k}, \textit{involving
colours} $i_1,\ldots, i_k$, iff ${\bf x}$ and ${\bf y}$ belong to
different $(\Delta_n \setminus \{ i_1,\ldots, i_k\})$-residues of
$\G$.

In this case a new $(n+1)$-coloured graph $\G'$ can be obtained by
deleting ${\bf x},{\bf y}$ and all their incident edges from $\G$
and then joining, for each $i\in\Delta_n \setminus \{i_1,\ldots
,i_k\}$, the vertex $i$-adjacent to ${\bf x}$ to the vertex
$i$-adjacent to ${\bf y}$. $\G'$ is said to be obtained from $\G$ by
\textit{cancelling} (or \textit{deleting}) \textit{the $k$-dipole}
$\Theta$. Conversely $\G$ is said to be obtained from $\G'$ by
\textit{adding the $k$-dipole} $\Theta$.

By a \textit{dipole move}, we mean either the \textit{adding} or the
\textit{cancelling} of a dipole from a gem $\G$.

As proved in \cite{FG}, \textit{two gems $\G$ and $\G'$ represent
PL-homeomorphic manifolds iff they can be obtained from each other
by a finite sequence of dipole moves}.

An $n$-dipole $\Theta=(\bf x, \bf y)$ is often called a
\textit{blob} (see \cite{LM}, where a different calculus for gems is
introduced). If $c$ is the (only) colour not involved in the blob
$\Theta$, and $\bf x', \bf y'$ are the vertices $c$-adjacent to $\bf
x$ and $\bf y$ respectively, then the cancelling of $\Theta$ from
$\G$ produces (in $\G'$) a new $c$-coloured edge $\bf e'$, joining
$\bf x'$ with $\bf y'$. Following Lins, we call the inverse
procedure the \textit{adding of a blob on the edge $\bf e'$}.

Two vertices ${\bf x}, {\bf y}$ of an $(n+1)$-coloured graph $\G$
are called \textit{completely separated} if, for each colour $c \in
\Delta_n$, ${\bf x}$ and ${\bf y}$ belong to two different $\hat
c$-residues. The \textit{fusion graph} $\G \text{fus}({\bf x},{\bf
y})$ is obtained simply by deleting ${\bf x}$ and ${\bf y}$ from
$\G$ and then by gluing together the "hanging edges" with the same
colours.

The following result was first proved, for case (a), in \cite{L}
and, for case (b), in \cite{L} ($n=3$) and in \cite{GV}.

\begin{lemma}\label{1} Let ${\bf x}, {\bf y}$ be two completely separated vertices of
a (possibly disconnected) graph $\G$.
\begin{itemize}
\item[(a)] If ${\bf x}$ and ${\bf y}$ belong to the (only) two different
components $\G^\prime$ and $\G^{\prime\prime}$ of $\G$, representing
two $n$-dimensional manifolds $M^\prime$ and $M^{\prime\prime}$
respectively, then $\G \text{fus}({\bf x},{\bf y})$ is a gem of a
connected sum $M^\prime\# M^{\prime\prime}$.
\item[(b)] If $\G$ is a gem of a (connected) $n$-manifold $M$,
then $\G \text{fus}({\bf x},{\bf y})$ is a gem of $M \# \mathbb H$,
where $\mathbb H$ is either $\mathbb S^{n-1} \times \mathbb S^1$ or
$\mathbb S^{n-1} \tilde \times \mathbb S^1$ (i. e. the orientable or
non-orientable $(n-1)$-sphere bundle over $\mathbb S^1$).
\end{itemize}
\end{lemma}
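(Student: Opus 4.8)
The plan is to reduce both cases to a careful analysis of the effect of the fusion operation on the $\hat c$-residues and then invoke the characterization ``$\G$ is a gem of an $n$-manifold iff every $\hat c$-residue represents $\mathbb S^{n-1}$'' together with the dipole-move description of PL-homeomorphism from \cite{FG}. First I would set up notation: for each colour $c$, write ${\bf x}_c$ and ${\bf y}_c$ for the vertices $c$-adjacent to ${\bf x}$ and ${\bf y}$ in $\G$, so that in $\G\text{fus}({\bf x},{\bf y})$ the hanging $c$-edges are joined to produce a new $c$-coloured edge ${\bf e}_c$ from ${\bf x}_c$ to ${\bf y}_c$. The key local observation is that adding a blob (i.e.\ an $n$-dipole) on the edge ${\bf e}_c$ recreates a single vertex of colour-pattern identical to ${\bf x}$ (or ${\bf y}$); iterating this idea, I want to show that $\G$ itself is obtained from $\G\text{fus}({\bf x},{\bf y})$ — or from a closely related graph — by a sequence of blob additions, hence the two graphs represent manifolds related by connected sums of the pieces introduced by those blobs.

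For case (a): since ${\bf x}\in\G'$ and ${\bf y}\in\G''$ lie in different components, in $\G\text{fus}({\bf x},{\bf y})$ the hanging $c$-edges from the $\G'$-side get glued to those from the $\G''$-side, so each new edge ${\bf e}_c$ bridges the two former components; the result is connected. I would argue that $\G\text{fus}({\bf x},{\bf y})$ is, up to the obvious identification, exactly the graph obtained by taking $\G'$ with a blob added on some $c$-coloured edge (producing a vertex playing the r\^ole of ${\bf x}$) and $\G''$ with a blob added likewise (producing ${\bf y}$), and then cancelling the $n$-dipole $\{{\bf x},{\bf y}\}$ — but cancelling an $n$-dipole whose two vertices lie in the two pieces being joined is precisely the graph-theoretic incarnation of the connected sum on the topological side. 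Concretely, the complete separation hypothesis guarantees, colour by colour, that ${\bf x}$ and ${\bf y}$ sit in distinct $\hat c$-residues, which is exactly the condition ensuring that the surgery described by the fusion does not create a new handle but instead performs a connected sum; I would verify that each $\hat c$-residue of $\G\text{fus}({\bf x},{\bf y})$ is obtained from an $\hat c$-residue of $\G'$ and one of $\G''$ by a connected sum of two copies of $\mathbb S^{n-1}$, which is again $\mathbb S^{n-1}$, so by the manifold-recognition criterion $\G\text{fus}({\bf x},{\bf y})$ is a gem of an $n$-manifold; identifying that manifold with $M'\#M''$ follows by tracking the simplicial picture of $K(\G\text{fus}({\bf x},{\bf y}))$, where removing $\sigma({\bf x})$ and $\sigma({\bf y})$ and regluing along boundary $(n-1)$-spheres is the standard construction of the connected sum.

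For case (b): now ${\bf x}$ and ${\bf y}$ lie in the same connected gem $\G$ of $M$. The argument is formally the same at the level of residues — complete separation still forces, for every $c$, that ${\bf x}$ and ${\bf y}$ belong to different $\hat c$-residues, so each $\hat c$-residue of $\G\text{fus}({\bf x},{\bf y})$ arises by fusing two $\mathbb S^{n-1}$-residues of $\G$ along a pair of points, again yielding $\mathbb S^{n-1}$, whence $\G\text{fus}({\bf x},{\bf y})$ is a gem of some closed $n$-manifold. The difference is topological: removing two disjoint $n$-balls from $|K(\G)|=M$ and gluing their boundary spheres together is exactly the construction of $M\mathbin{\#}\mathbb H$, where $\mathbb H$ is the $(n-1)$-sphere bundle over $S^1$; the bundle is orientable ($\mathbb S^{n-1}\times\mathbb S^1$) precisely when the gluing is orientation-reversing on the two boundary spheres with respect to a fixed orientation induced from $M$ — equivalently, when $\G\text{fus}({\bf x},{\bf y})$ is bipartite — and non-orientable ($\mathbb S^{n-1}\tilde\times\mathbb S^1$) otherwise. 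I would make this precise by choosing a ball neighbourhood of each of $\sigma({\bf x}),\sigma({\bf y})$ in $K(\G)$, noting the hanging-edge gluing of the fusion corresponds to an identification of the two boundary $(n-1)$-spheres, and citing the standard fact that $M$ minus two balls, with boundary spheres identified, is $M\#(S^{n-1}\times S^1)$ or $M\#(S^{n-1}\tilde\times S^1)$ according to the identification class.

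\textbf{Main obstacle.} The routine part is the residue bookkeeping — checking that completely separated vertices produce, colour by colour, a connected sum of two $(n-1)$-spheres rather than a handled residue. The genuinely delicate step is the topological identification of $\G\text{fus}({\bf x},{\bf y})$ with $M'\#M''$ (resp.\ $M\#\mathbb H$): one must match the combinatorial surgery on the coloured graph with the geometric surgery on $|K(\G)|$, i.e.\ show that deleting the simplices $\sigma({\bf x}),\sigma({\bf y})$ and regluing the resulting boundary spheres via the colour-preserving identification dictated by the hanging edges is PL-equivalent to removing two PL-balls and gluing along $S^{n-1}$. This is where the hypothesis that the ambient complexes are genuine $n$-manifolds (so that small simplicial neighbourhoods are balls) is used in earnest, and where the orientability dichotomy in (b) has to be pinned down by a sign computation on the gluing map.
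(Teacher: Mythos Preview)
The paper does not give its own proof of this lemma: it is stated as a known result, with case~(a) attributed to Lins~\cite{L} and case~(b) to \cite{L} (for $n=3$) and to Gagliardi--Volzone~\cite{GV} in general. So there is no in-paper argument to compare against.

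Your outline is essentially the standard argument and is sound in its broad strokes. One point worth tightening: when you assert that each $\hat c$-residue of $\G\text{fus}({\bf x},{\bf y})$ is ``a connected sum of two copies of $\mathbb S^{n-1}$'', what you are really invoking is case~(a) of this very lemma in dimension $n-1$. Indeed, the $\hat c$-residue of the fused graph containing the new edges is exactly $\bigl(\G_{\hat c}({\bf x})\cup\G_{\hat c}({\bf y})\bigr)\text{fus}({\bf x},{\bf y})$, and the complete-separation hypothesis (in both cases (a) and (b)) forces ${\bf x}$ and ${\bf y}$ to lie in \emph{different components} of $\G_{\hat c}$, so you are always in the disconnected situation one dimension down. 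Making this explicit as an induction on $n$ (with the base case $n=2$ handled by an Euler-characteristic count, or simply by inspection of $3$-coloured sphere gems) would close the only real loose end in the residue bookkeeping.

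The topological identification you sketch---removing the open simplices $\sigma({\bf x}),\sigma({\bf y})$ and regluing their boundary $(n-1)$-spheres via the colour-preserving map encoded by the hanging edges---is correct, and your dichotomy for the handle type in~(b) via bipartiteness of the fused graph is the right bookkeeping for orientability. This is the approach of~\cite{GV}; you have recovered it.
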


\smallskip
Note that such a manifold $\mathbb H$ is often called a
\textit{handle} (of dimension $n$). A manifold $M$ is called
\textit{handle-free} if it admits no handles as connected summands
(i.e. if $M$ is not homeomorphic to $M^\prime\# \mathbb H, M^\prime$
being any $n$-manifold).

\smallskip

\section{Switching of $\rho$-pairs.}

Let $(\G,\gamma)$ be an $(n+1)$-coloured graph. Let further $(\bf {e},\bf {f})$ be any pair of edges,
both coloured $c$, of
$\G.$

If we delete ${\bf e},{\bf f}$ from $\G$, we obtain an edge-coloured
graph $\overline \G$, with exactly four vertices of degree $n$
(namely, the endpoints ${\bf u},{\bf v}$ of ${\bf e}$ and the
endpoints ${\bf w},{\bf z}$ of ${\bf f}$).

Now, there are exactly two $(n+1)$-coloured graphs $(\G_1,
\gamma_1), (\G_2, \gamma_2)$ (different from $(\G, \gamma)$) which
can be obtained by adding two new edges (both coloured $c$) to
$\overline \G$ : such edges are either ${\bf e}_1, {\bf f}_1$,
joining ${\bf u}$ with ${\bf w}$ and ${\bf v}$ with ${\bf z}$
respectively, or ${\bf e}_2, {\bf f}_2$, joining ${\bf u}$ with
${\bf z}$ and ${\bf v}$ with ${\bf w}$ respectively. (See Figure 1,
Figure 1a and Figure 1b, where, w.l.o.g., we consider $c=0$)

\bigskip

\smallskip
\centerline{\scalebox{0.5}{\includegraphics{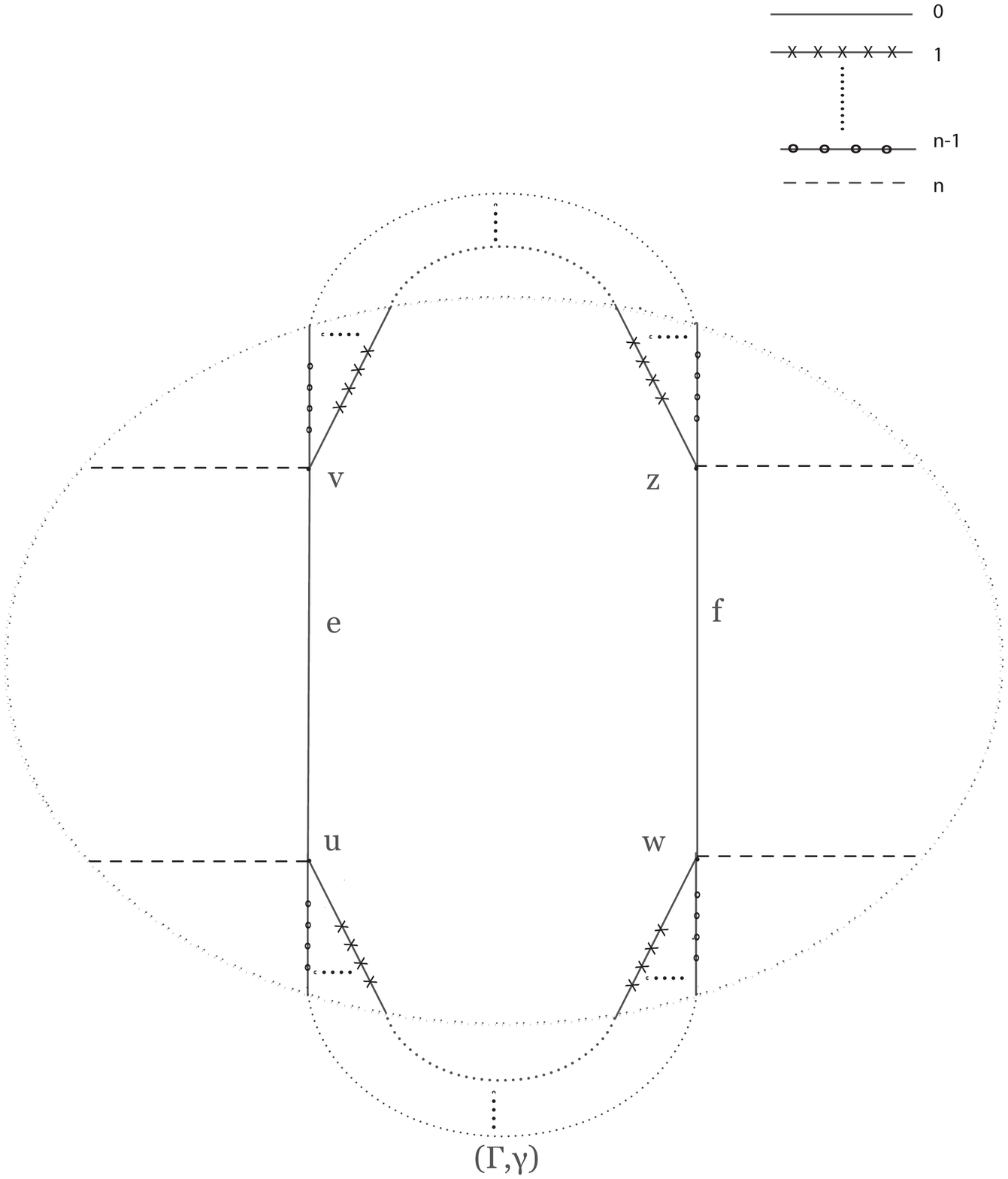}}}

\bigskip
\centerline{\bf Figure 1}

\bigskip

We will say that $(\G_1,\gamma_1)$ and $(\G_2,\gamma_2)$ are
obtained from $(\G,\gamma)$ by a {\it switching} on the pair $({\bf
e},{\bf f}).$

\bigskip

\smallskip
\centerline{\scalebox{0.8}{\includegraphics{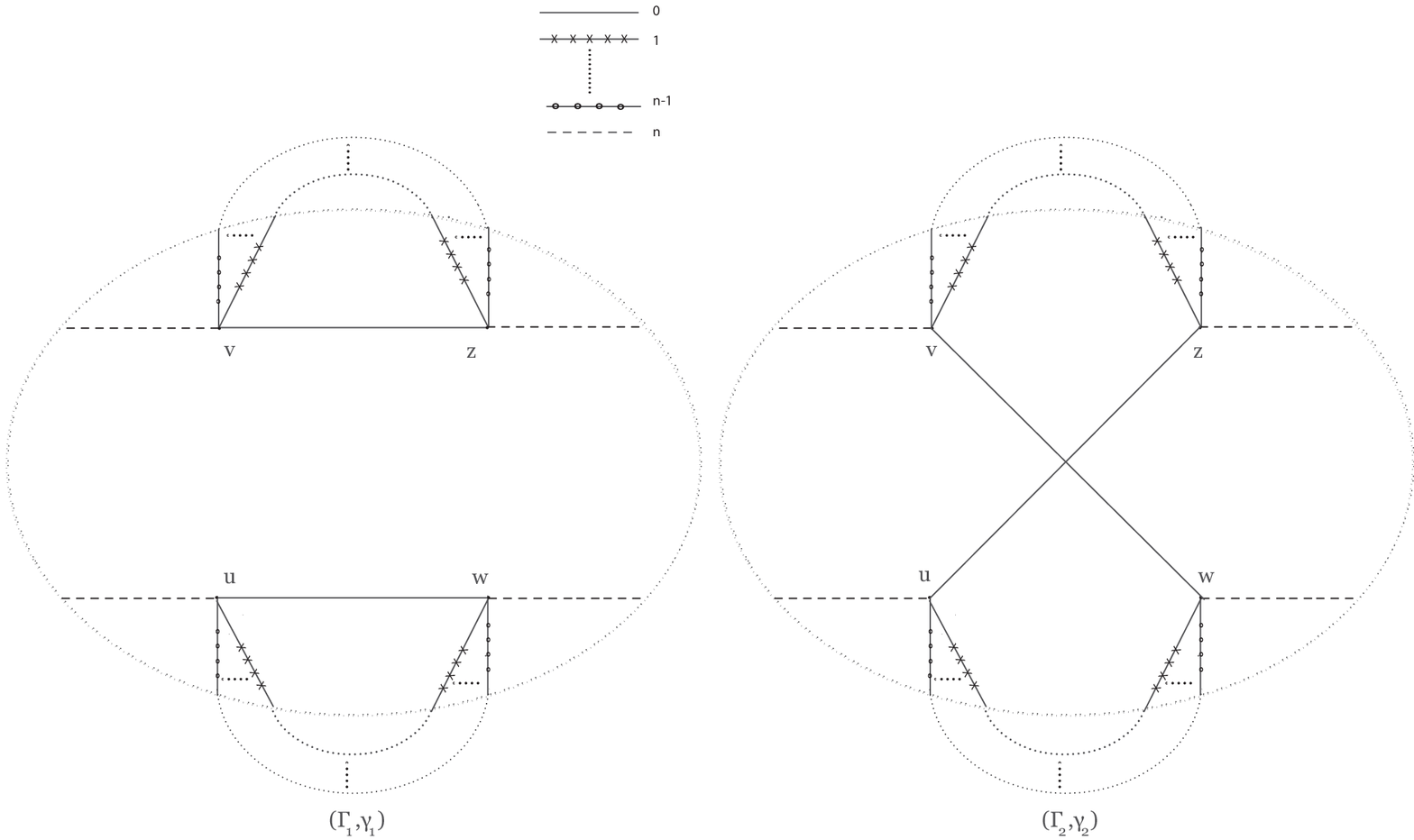}}}
\bigskip
\, \, \, \, \, {\bf Figure 1a \, \, \, \, \, \, \, \, \, \, \, \, \,
\, \, \, \, \, \, \, \, \, \, \, \, \, \, \, Figure 1b}

\bigskip

\medskip

Actually, we are interested in particular pair of equally coloured
edges of $\G$. More precisely:

\medskip

\begin{definition} A pair $R= ({\bf {e}},{\bf {f}})$ of edges of $\G$, with $\gamma({\bf e}) = \gamma({\bf f}) =
c,$ will be called:
\begin{itemize}
\item[(a)] a $\rho_n$-pair involving colour $c$ if, for each
colour $i \in \Delta_n \setminus \{c\}$, we have $\G_{\{c,i\}}({\bf
e}) = \G_{\{c,i\}}({\bf f})$;
\item[(b)] a $\rho_{n-1}$-pair, involving colour $c$, if there
exists a colour $d \neq c$, such that:
\begin{itemize}
\item[($b_1$)] $\G_{\{c,d\}}({\bf e}) \neq \G_{\{c,d\}}({\bf f}),$ and
\item[($b_2$)] for each colour $j \in \Delta_n \setminus \{c,d\}, \G_{\{c,j\}}({\bf e}) = \G_{\{c,j\}}({\bf f}).$
\end{itemize}
\end{itemize}
\end{definition}

The colour $d$ of above will be said to be {\it not involved} in the
$\rho_{n-1}$-pair $R$.

\smallskip

By a {\it $\rho$-pair}, we will mean for short either a
$\rho_n$-pair or a $\rho_{n-1}$-pair.
\medskip

\begin{theorem} \label{n.c.c.} Let $(\G,\gamma)$ be an $(n+1)$-coloured graph, $R =
({\bf e},{\bf f})$ be a $\rho$-pair of $\G$ and let
$(\G_1,\gamma_1)$ be obtained from $(\G,\gamma)$ by any switching of
$R.$ Then:
\begin{itemize}
\item[(a)] if $R$ is a $\rho_{n-1}$- pair, then $\G$ and $\G_1$ have
the same number of components;
\item[(b)] if $R$ is a $\rho_{n}$- pair, then $\G_1$ has at most one more component than $\G$.
\end{itemize}
\end{theorem}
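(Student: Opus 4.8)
The plan is to track how the switching on $R=(\mathbf e,\mathbf f)$ (with $\gamma(\mathbf e)=\gamma(\mathbf f)=c$, endpoints $\mathbf u,\mathbf v$ of $\mathbf e$ and $\mathbf w,\mathbf z$ of $\mathbf f$) affects the connected components of $\G$, and to exploit the defining property of a $\rho$-pair: for all colours $i\neq c$ (with the single possible exception of the colour $d$ not involved, in the $\rho_{n-1}$ case), the edges $\mathbf e$ and $\mathbf f$ lie on the \emph{same} bicoloured $\{c,i\}$-cycle. First I would observe that deleting $\mathbf e,\mathbf f$ from $\G$ yields $\overline\G$ with four vertices $\mathbf u,\mathbf v,\mathbf w,\mathbf z$ of degree $n$; whether $\G$ is connected or has two components, and how $\G_1$ reconnects, is entirely governed by which of these four vertices lie in a common component of $\overline\G$. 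Since $\mathbf u,\mathbf v$ are joined by $\mathbf e$ and $\mathbf w,\mathbf z$ by $\mathbf f$, both $\G$ and $\G_1$ are obtained from $\overline\G$ by adding two $c$-edges pairing up these four vertices, so the number of components of $\G$ (resp.\ $\G_1$) is the number of components of $\overline\G$ minus the number of those two edges whose endpoints lie in distinct components of $\overline\G$.

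The key reduction is therefore: how many components of $\overline\G$ do the four vertices $\mathbf u,\mathbf v,\mathbf w,\mathbf z$ meet? I claim that in both the $\rho_n$ and $\rho_{n-1}$ cases, at least one of the pairs $\{\mathbf u,\mathbf v\}$, $\{\mathbf u,\mathbf w\}$, $\{\mathbf u,\mathbf z\}$ (hence by symmetry a spanning structure on the four vertices) lies in a common component of $\overline\G$, because of a surviving bicoloured cycle through $\mathbf e$ and $\mathbf f$. Concretely, pick any colour $i\neq c$ for which $\G_{\{c,i\}}(\mathbf e)=\G_{\{c,i\}}(\mathbf f)$: in the original graph this is a single cycle $C$ alternately coloured $c$ and $i$ containing both $\mathbf e$ and $\mathbf f$. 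Removing $\mathbf e$ and $\mathbf f$ from $C$ breaks it into two $i$-monochromatic-capped arcs, each of which is an $\{c,i\}$-path in $\overline\G$ connecting one endpoint of $\mathbf e$ to one endpoint of $\mathbf f$. Thus $\overline\G$ contains a path from $\{\mathbf u,\mathbf v\}$ to $\{\mathbf w,\mathbf z\}$; so $\mathbf u,\mathbf v,\mathbf w,\mathbf z$ occupy at most \emph{three} components of $\overline\G$, and in fact — using that there are at least two such colours $i$ (always, when $n\ge 3$, since $|\Delta_n\setminus\{c\}|\ge 3$ and at most one colour $d$ is excluded) — one gets two such connecting arcs, which can be analyzed to pin down the component structure precisely.

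From here the two cases diverge by a short case check. In the $\rho_n$ case every colour $i\neq c$ gives such a connecting arc; combining two of them shows $\overline\G$ has at most two components among the four vertices and moreover that $\G$ and $\G_1$ each lose the same count in $\{0,1,2\}$ up to a possible discrepancy of exactly one — I would enumerate the (few) possible partitions of $\{\mathbf u,\mathbf v,\mathbf w,\mathbf z\}$ into components of $\overline\G$ compatible with having these connecting arcs, and in each check directly that passing from the pairing $\{\mathbf u\mathbf v,\mathbf w\mathbf z\}$ to $\{\mathbf u\mathbf w,\mathbf v\mathbf z\}$ or $\{\mathbf u\mathbf z,\mathbf v\mathbf w\}$ changes the component count by at most $+1$, giving (b). In the $\rho_{n-1}$ case the single excluded colour $d$ means we still have all colours $i\in\Delta_n\setminus\{c,d\}$ (at least two of them, as $n\ge 3$) giving connecting arcs, which already forces the four vertices into a single component of $\overline\G$ (two internally disjoint connecting arcs between $\{\mathbf u,\mathbf v\}$ and $\{\mathbf w,\mathbf z\}$ span all four), whence both $\G$ and $\G_1$ have exactly the same number of components, namely one more than $\overline\G$; that is (a).

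The main obstacle I anticipate is the bookkeeping in the $\rho_n$ case: one must be careful that \emph{two distinct} connecting arcs (from two distinct colours $i,i'$) really do constrain the partition enough, since a priori two arcs between $\{\mathbf u,\mathbf v\}$ and $\{\mathbf w,\mathbf z\}$ could both run, say, $\mathbf u\!-\!\mathbf w$, leaving open whether $\mathbf v$ and $\mathbf z$ are connected to them in $\overline\G$. Handling the resulting sub-cases — and checking that the worst case genuinely produces the "$+1$" and not "$+2$" — is the delicate part; I would organize it by first reducing to the partition type of $\{\mathbf u,\mathbf v,\mathbf w,\mathbf z\}$ in $\overline\G$ and then observing that the two switchings $\G_1,\G_2$ together with $\G$ realize all three perfect matchings on these four vertices, so a pigeonhole on "which matching edges are intra-component" yields the bound cleanly.
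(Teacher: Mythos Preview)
Your argument for part (a) has a genuine gap. You discard the colour $d$ not involved in the $\rho_{n-1}$-pair and try to conclude using only colours $j\in\Delta_n\setminus\{c,d\}$, for which $\G_{\{c,j\}}(\mathbf e)=\G_{\{c,j\}}(\mathbf f)$. From each such common $\{c,j\}$-cycle, removing $\mathbf e$ and $\mathbf f$ produces two arcs, one joining $\mathbf u$ to some endpoint of $\mathbf f$ and the other joining $\mathbf v$ to the remaining endpoint. But nothing prevents \emph{every} colour $j\neq c,d$ from inducing the \emph{same} pairing, say $\mathbf u\!-\!\mathbf w$ and $\mathbf v\!-\!\mathbf z$ (indeed, in the bipartite case this is forced, as the paper's own discussion of the ``preferred'' switching shows). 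In that situation your arcs only certify that $\{\mathbf u,\mathbf w\}$ and $\{\mathbf v,\mathbf z\}$ are each contained in a component of $\overline\G$; these two components may well be distinct. Then the switching $\G_1$ with edges $\mathbf u\mathbf w$, $\mathbf v\mathbf z$ would have one more component than $\G$, and your proof of (a) fails. Your parenthetical ``two internally disjoint connecting arcs \dots\ span all four'' conflates ``have all four vertices as endpoints'' with ``lie in a single component''.

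What you are missing is precisely the information carried by the colour $d$. Since $\G_{\{c,d\}}(\mathbf e)\neq\G_{\{c,d\}}(\mathbf f)$, the edge $\mathbf e$ lies on a $\{c,d\}$-cycle not containing $\mathbf f$, so deleting $\mathbf e$ from that cycle leaves a path in $\overline\G$ from $\mathbf u$ to $\mathbf v$; similarly there is a path in $\overline\G$ from $\mathbf w$ to $\mathbf z$. Combining these with a single cross-arc from any $j\neq c,d$ puts all four vertices in one component of $\overline\G$, and (a) follows. This is exactly the paper's argument, and it also removes your unnecessary hypothesis $n\ge 3$: one colour $j\neq c,d$ suffices. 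For part (b) your plan is workable but overcomplicated; the paper simply observes that one colour $i\neq c$ already pairs the four vertices into at most two components of $\overline\G$, whence $\G_1$, containing $\overline\G$, has at most one more component than $\G$. No enumeration of partitions or pigeonhole over the three matchings is needed.
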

\begin{proof} As before, let us call ${\bf u},{\bf v}$ the endpoints of $\bf e$ and
${\bf w},{\bf z}$ the endpoints of $\bf f$. Let further
$\overline\G$ be the graph obtained by deleting $\bf e$ and $\bf f$
from $\G$.

As it is easy to check, ${\bf u},{\bf v},{\bf w}$ and $\bf z$ lie in
the same component of $\G$.

\medskip

(a) If $R$ is a $\rho_{n-1}$- pair, then $\bf u,\bf v,\bf w$ and
$\bf z$ also lie in the same component of $\overline\G$ (and
therefore of $\G_1$).

For, let $d$ be the colour not involved in $R$. By definition of
$\rho_{n-1}$- pair, $\G_{\{c,d\}}(\bf e)$ and $\G_{\{c,d\}}(\bf f)$
are two different bicoloured cycles of $\G$, the first one
containing $\bf e$ and the second one containing $\bf f$.

Hence there are two paths of $\overline\G$, which join $\bf u$ with
$\bf v$ and $\bf w$ with $\bf z$, respectively.

On the other hand, if $j$ is any colour, $j\neq c,d$, then
$\G_{\{c,j\}}({\bf e}) = \G_{\{c,j\}}({\bf f})$ is a single
bicoloured cycle, containing both $\bf e$ and $\bf f$.

This proves that there is a path of $\overline\G$, which joins $\bf
u$ with either $\bf w$ or $\bf z$.

This completes the proof of (a).

\medskip

(b) If $i\in \Delta_n \setminus \{c\}$, then by definition of
$\rho_{n}$-pair, $\G_{\{c,i\}}({\bf e}) = \G_{\{c,i\}}({\bf f})$.
This proves that there are two paths of $\overline\G$, the first one
joining $\bf u$ with either endpoint of $\bf f$, $\bf w$ say, and
the second one joining $\bf z$ with $\bf v$.

This shows that $\overline\G$ (hence also $\G_1$) has at most one
more component than $\G$.
\end{proof}

\bigskip

 In the following, we will show that in
some particular, but geometrically relevant, cases, it is possible
to choose a "preferred" way to switch a pair of equally coloured
edges of $(\G,\gamma)$.

\medskip

CASE (A):\, \textit{$\G$ bipartite.}

\medskip

If $R = (\bf e,\bf f)$ is any pair of edges, both coloured $c$ (in
particular, if $R$ is a $\rho$-pair) of a bipartite $(n+1)$-coloured
graph $(\G,\gamma)$, then it is easy to see that only one of the two
possible switching of $R$ is again bipartite.

If, further, $V_0, V_1$ are the two bipartition classes of $\G$ and
we orient $\bf e,\bf f$ from $V_0$ to $V_1$, so that their tails
$\bf x_0 = \bf e(0), \bf y_0 = \bf f(0)$ belong to $V_0$, and their
heads $\bf x_1 = \bf e(1), \bf y_1 = \bf f(1)$ belong to $V_1$, the
(bipartite) switching $(\G',\gamma')$ of $R$ is obtained as follows:

\begin{itemize}
\item[(I)] delete $\bf e$ and $\bf f$ from $\G$ (thus obtaining
$\overline\G$);
\item[(II)] join $\bf x_0$ with $\bf y_1$ and $\bf x_1$ with $\bf y_0$ by two new
edges $\bf e', \bf f'$, both coloured $c$.
\end{itemize}

\medskip

CASE (B): \textit{$\G$ non bipartite, with bipartite residues.}

\medskip

If $\G$ is a non bipartite graph, but for each colour $i$,
$\G_{\hat{\imath}}$ has bipartite components (residues), then we
shall consider two subcases

\medskip

Subcase (B$_1$):  \textit{$R = (\bf e,\bf f)$ is a $\rho_{n-1}$-pair
of $\G$, involving colour $c$ and not involving colour $d$.}

\smallskip

Let $\Xi$ be the residue of $\G_{\hat{d}}$ containing both $\bf e$
and $\bf f$ (note that $\bf e$ and $\bf f$ belong to the same
$\hat{\imath}$-residue, because for every colour $i \neq c,d$,
$\G_{\{c,i\}}(\bf e) = \G_{\{c,i\}}(\bf f)$.)

Let $V_0, V_1$ be the two bipartition classes of $\Xi$ (recall that
$\Xi$ is bipartite), As in Case (A), let us orient $\bf e$ from
$V_0$ to $V_1$. Now, the switching of $R = (\bf e,\bf f)$ is the
$(n+1)$-coloured graph $(\G',\gamma')$ , obtained as before (Case
(A)):
\begin{itemize}
\item[(I)] delete $\bf e$ and $\bf f$ from $\G$;
\item[(II)] join $\bf x_0= \bf e(0)$ with $\bf y_1 = \bf f(1)$ and $\bf x_1 = \bf e(1)$ with $\bf y_0 = \bf f(0)$
by two
new edges $\bf e', \bf f'$, both coloured $c$.
\end{itemize}

\medskip

Subcase (B$_2$): \textit{$R = (\bf e,\bf f)$ is a $\rho_{n}$-pair
(involving colour $c$) of $\G$ and $n\geq 3$.}

\smallskip

Let us orient arbitrarily the edge $\bf e$, as before, let us call
$\bf x_0 = \bf e(0)$ and $\bf x_1 = \bf e(1)$. Let now $i$ be any
colour different from $c$. The orientation on $\bf e$ induces a
coherent orientation on all edges of the cycle $\G_{\{c,i\}}(\bf e)$
and, in particular, on the edge $\bf f$ (with the induced
orientation).

Now, we shall prove that the orientation on $\bf f$ (and hence its
tail and its head) is independent from the choice of colour $i$ ($i
\neq c$).

For, let $h$ be any colour of $\Delta_n, h \neq c$, and let $\bf
y_0^h, \bf y_1^h$ be the tail and the head of the edge $\bf f$, with
the orientations induced by the cycle $\G_{\{c,h\}}(\bf e)$ ($\bf e$
being oriented as before).

Let now $j \in \Delta_n, j \neq i,c$. In order to prove that $\bf
y_0^i = \bf y_0^j$ (and, as a consequence $\bf y_1^i = \bf y_1^j$),
let us consider a further colour $k$, with $k \neq i,j,c$.

Note that such a colour $k$ must exist, since $n \geq 3$ and
therefore $\Delta_n$ contains at least four colours.

Let now $\Xi$ be the $\hat{k}$-residue of $\G$, which contains $\bf
e$. $\Xi$ is bipartite and contains both the cycles
$\G_{\{c,i\}}(\bf e)$ and $\G_{\{c,j\}}(\bf e)$. As a consequence,
$\bf y_0^i = \bf y_0^j$. In fact, supposing on the contrary, $\bf
y_0^i = \bf y_1^j$, we could construct an odd cycle of $\Xi$.

The construction of the switching $(\G',\gamma')$ of the
$\rho_{n}$-pair $R = (\bf e,\bf f)$ can now be performed as in the
above cases:
\begin{itemize}
\item[(I)] delete $\bf e$ and $\bf f$ from $(\G,\gamma)$;
\item[(II)] join $\bf x_0$ with $\bf y_1^i$ and $\bf x_1 $ with $\bf y_0^i$ by two new
edges $\bf e', \bf f'$, both coloured $c$.
\end{itemize}

\begin{remark} The above cases include all $\rho$-pairs of gems
representing orientable $n$-manifolds (Case (A)), all
$\rho_{n-1}$-pairs of gems representing non orientable $n$-manifolds
(Case (B$_1$)) and all $\rho_n$-pairs of gems representing non
orientable $n$-manifolds, with $n \geq 3$ (Case (B$_2$)).
\end{remark}

The only remaining case is that of a $\rho_2$-pair of a gem $\G$
representing a non orientable surface, for which it is not always
possible the choice of a standard switching.

In fact, for $n = 2$, the procedure described in Case (B$_2$)
doesn't work, as it depends on the choice of the colour $i$.
\footnote{The case $n=2$ is completely analyzed in \cite{B}, also for graphs representing surfaces with non-empy boundary.}

\section{Main results}

The present section is devoted to prove the following Theorems
\ref{switchrn} and \ref{switchrn-1}, which concern the geometrical
meaning of switching $\rho$-pairs in gems of $n$-dimensional
manifolds.

\bigskip

As in Section 2, let $\mathbb H$ be a handle, i.e. either $(\mathbb
S^{n-1}\times \mathbb S^1)$ or $(\mathbb S^{n-1}\tilde \times
\mathbb S^1).$

\begin{theorem}\label{switchrn} Let $(\G,\g)$ be a gem of a
(connected) $n$-manifold $M$, $n\geqslant 3$, $R=(\bf e, \bf f)$ be
a $\rho_n$-pair in $\G$ and let $(\G', \g')$ be the $(n+1)$-coloured
graph, obtained by switching $R$. Then:
\begin{itemize}
\item[(a)] if $(\G', \g')$ splits into two connected components,
$(\G'_1, \g'_1)$ and $(\G'_2 \g'_2)$ say, then they are gems of two
$n$-manifolds $M'_1$ and $M'_2$ respectively, and $M \cong M'_1 \#
M'_2;$
\item[(b)] if $(\G', \g')$ is connected, then it is a gem of an
$n$-manifold $M'$ such that $M \cong M' \# \mathbb H.$
\end{itemize}
Moreover, if $(\G, \g)$ is a crystallization of $M$, then $(\G',
\g')$ must be connected, and only case (b) may occur.
\end{theorem}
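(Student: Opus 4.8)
The plan is to reduce the statement to the fusion Lemma~\ref{1} by inserting blobs, and to settle the needed fact about residues by induction on $n$. Write $\mathbf e=\mathbf u\mathbf v$, $\mathbf f=\mathbf w\mathbf z$, and fix the preferred orientation of Section~3 so that the (standard) switch replaces $\mathbf e,\mathbf f$ by the $c$-coloured edges $\mathbf e'=\mathbf u\mathbf z$ and $\mathbf f'=\mathbf v\mathbf w$. As in the proof of Theorem~\ref{n.c.c.}, the $\{c,i\}$-cycle through $\mathbf e$ (which, $R$ being a $\rho_n$-pair, also passes through $\mathbf f$) shows that $\overline\G:=\G\setminus\{\mathbf e,\mathbf f\}$ joins $\mathbf u$ to $\mathbf z$ and $\mathbf v$ to $\mathbf w$; hence $\G'$ and $\overline\G$ have the same number of components, namely $1$ or $2$ by Theorem~\ref{n.c.c.}(b).

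Next I would add a blob on $\mathbf e'$ (new vertices $\mathbf p,\mathbf p'$ with $\mathbf p$ $c$-adjacent to $\mathbf u$ and $\mathbf p'$ $c$-adjacent to $\mathbf z$) and a blob on $\mathbf f'$ (new vertices $\mathbf q,\mathbf q'$ with $\mathbf q$ $c$-adjacent to $\mathbf v$ and $\mathbf q'$ $c$-adjacent to $\mathbf w$), obtaining a graph $\G'_{++}$ which by \cite{FG} represents the same manifold(s) as $\G'$. The first key observation is the purely combinatorial identity: $\G'_{++}\,\text{fus}(\mathbf p,\mathbf q)$ equals $\G$ with a blob added on the edge $\mathbf f$. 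Indeed, deleting $\mathbf p,\mathbf q$ and regluing the hanging edges by colour restores the $c$-edge $\mathbf u\mathbf v=\mathbf e$ and joins $\mathbf p'$ to $\mathbf q'$ by one edge of each colour $\ne c$, with $\mathbf p'$ still $c$-adjacent to $\mathbf z$ and $\mathbf q'$ still $c$-adjacent to $\mathbf w$ — that is precisely a blob sitting on $\mathbf f=\mathbf w\mathbf z$. Hence $\G'_{++}\,\text{fus}(\mathbf p,\mathbf q)$ is a gem of $M$.

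The second observation is that $\mathbf p$ and $\mathbf q$ are completely separated in $\G'_{++}$. For colour $c$ this is clear, their $\hat c$-residues being the two new blobs. For $k\ne c$, the $\hat k$-residue of $\mathbf p$ in $\G'_{++}$ is $\{\mathbf p,\mathbf p'\}$ together with the $\hat k$-residue of $\{\mathbf u,\mathbf z\}$ in $\G'$ (and symmetrically for $\mathbf q$ with $\{\mathbf v,\mathbf w\}$), so it suffices to see that $\mathbf u$ and $\mathbf v$ lie in different $\hat k$-residues of $\G'$. Now the $\hat k$-residue $\Xi_k$ of $\G$ through $\mathbf e$ also contains $\mathbf f$ (as $R$ is a $\rho_n$-pair), is a gem of $\mathbb S^{n-1}$ (as $\G$ is a gem of a manifold), carries the induced maximal ($\rho_{n-1}$-)pair, and inherits the standard switch from $\G$. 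Applying the theorem in dimension $n-1$ to $\Xi_k$ (when $n=3$, $\Xi_k$ is a surface gem of $\mathbb S^2$ and this is the content of \cite{B}): its alternative (b) would give $\mathbb S^{n-1}\cong M''\#\mathbb H$, impossible by homology, so alternative (a) holds and switching splits $\Xi_k$ into two gems of $\mathbb S^{n-1}$, one containing $\mathbf u$ (hence $\mathbf z$, through $\mathbf e'$) and the other containing $\mathbf v$ (hence $\mathbf w$). In particular $\mathbf u,\mathbf v$ lie in different $\hat k$-residues of $\G'$; moreover every $\hat k$-residue of $\G'$ then represents $\mathbb S^{n-1}$ (those meeting $\mathbf e',\mathbf f'$ being switched $\Xi_k$'s, the others unchanged), so $\G'$ and each of its components is a gem of an $n$-manifold, and when $\G'$ is disconnected $\mathbf p$ and $\mathbf q$ lie in its two different components.

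With these in hand Lemma~\ref{1} concludes. If $\G'$ is connected, so is $\G'_{++}$, a gem of an $n$-manifold $M'$, and fusing the completely separated pair $(\mathbf p,\mathbf q)$ gives, by Lemma~\ref{1}(b), a gem of $M'\#\mathbb H$; since that is a gem of $M$, we obtain $M\cong M'\#\mathbb H$ — case (b). If $\G'$ splits into $\G'_1,\G'_2$, gems of $n$-manifolds $M'_1,M'_2$, then $\mathbf p$ and $\mathbf q$ lie in the two different components of $\G'_{++}$, and Lemma~\ref{1}(a) gives a gem of $M'_1\#M'_2$, again a gem of $M$, so $M\cong M'_1\#M'_2$ — case (a). Finally, if $\G$ is a crystallization, then $\G_{\hat c}$ is a connected spanning subgraph of $\overline\G$, so $\overline\G$ (hence $\G'$) is connected and only case (b) occurs. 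The step I expect to be the real obstacle is the faithful translation, for the \emph{standard} switch, between deleting and re-adding the two $c$-edges and the topological move of cutting $M$ along the $(n-1)$-sphere formed by the two $(n-1)$-cells of $K(\G)$ glued along $\mathbf e$ and along $\mathbf f$, and then capping off; this is exactly what forces the orientation bookkeeping of cases (A), (B$_1$), (B$_2$) and the hypothesis $n\ge 3$ (so that this sphere is two-sided and the fusion matching restricts to the identity on its equatorial $(n-2)$-sphere), and it reappears as the heart of the inductive step through the residues $\Xi_k$.
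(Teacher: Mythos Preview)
Your proof is correct and follows essentially the same strategy as the paper: establish by induction on $n$ (with base case the surface result from \cite{B}) that switching a maximal $\rho$-pair in a gem of $\mathbb S^{n-1}$ splits it into two gems of $\mathbb S^{n-1}$, use this on each $\hat k$-residue $\Xi_k$ to see that $\G'$ is a gem and that the relevant vertices are completely separated, insert blobs to realise $\G$ as a fusion of $\G'$, and conclude via Lemma~\ref{1}. The only differences are cosmetic: the paper adds a \emph{single} blob on $\mathbf f'$ and fuses the old vertex $\mathbf e(0)$ with the new blob vertex $\mathbf B$ (so that the fused graph is $\G$ itself rather than $\G$ with an extra blob), and it packages the induction by isolating the sphere case as a separate Lemma (Lemma~\ref{1n}) interleaved with the general step (Lemma~\ref{2n}), whereas you induct directly on the theorem and treat the sphere case as the special instance $M=\mathbb S^{n-1}$ arising in the residues. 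Your two-blob variant is slightly more symmetric; the paper's one-blob version is slightly more economical; the logical content is the same.
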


In order to prove Theorem \ref{switchrn}, we need some further
construction and a double sequence of Lemmas, which will be proved
by induction on $n$.

\begin{lemma}\label{1n} -- {\bf step n} \, \, Let $(\Sigma, \sigma)$ be a gem of the $n$-sphere
$\mathbb S^n$, $n\geqslant 2$, $R=(\bf e, \bf f)$ be a $\rho_n$-pair
of $\Sigma$ and let $(\Sigma', \sigma')$ be obtained by switching
$R$. Then $\Sigma'$ splits into two connected components, both
representing $\mathbb S^n.$
\end{lemma}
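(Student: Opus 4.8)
The plan is to reduce to the known $3$-dimensional result by exploiting the structure of a $\rho_n$-pair. Let $c$ be the colour involved in $R$, and let us first verify that $\Sigma'$ really does split into exactly two components. By Theorem \ref{n.c.c.}(b), $\Sigma'$ has at most one more component than $\Sigma$, hence at most two. That it has \emph{at least} two follows from an Euler-characteristic / regular-genus count: since $(\bf e,\bf f)$ is a $\rho_n$-pair, for every colour $i\neq c$ the two edges lie on the \emph{same} $\{c,i\}$-cycle of $\Sigma$, and this cycle splits into two cycles after the switching, so $g_{\{c,i\}}(\Sigma')=g_{\{c,i\}}(\Sigma)+1$ for each of the $n$ colours $i\neq c$, while $g_{\{i,j\}}$ is unchanged for $i,j\neq c$. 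Feeding this into the combinatorial formula for the Euler characteristic (or the connectedness count $2-2p=\sum g_{\hat c\text{-type quantities}}$) forces the number of connected components to increase by exactly $1$. Thus $\Sigma'=\Sigma'_1\sqcup\Sigma'_2$.

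Next I would check that each $\Sigma'_h$ is a gem of a closed $n$-manifold, i.e. that for each colour $b\in\Delta_n$ every $\hat b$-residue of $\Sigma'_h$ represents $\mathbb S^{n-1}$. For $b\neq c$ this is immediate, because deleting $\bf e,\bf f$ and re-inserting the switched edges does not change the $\hat c$-residues at all (the colour-$c$ edges are the only ones touched), and any $\hat b$-residue with $b\ne c$ that met the switching region was a single residue of $\Sigma$ containing both endpoints of $\bf e$ and of $\bf f$ (since $\G_{\{c,i\}}(\bf e)=\G_{\{c,i\}}(\bf f)$ for the relevant $i$), and the switching is an internal $\rho_{n-1}$- or $\rho_n$-type re-routing \emph{inside} that residue — invoking Lemma \ref{1n} in dimension $n-1$ (this is where the induction on $n$ enters) shows the resulting $\hat b$-residues of $\Sigma'_h$ still represent $\mathbb S^{n-1}$. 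The base case $n=2$ (a $\rho_2$-pair in a gem of $\mathbb S^2$) is elementary and can be checked directly. The colour $b=c$ residues are handled by Lemma \ref{1} together with the already-established manifold property for the other colours.

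It then remains to identify $M'_1\# M'_2$, where $M'_h=|K(\Sigma'_h)|$. Observe that the reverse operation — switching $R'=(\bf e',\bf f')$ back — recovers $\Sigma$, and at the level of Lemma \ref{1}(a): take the disjoint graph $\Sigma'_1\sqcup\Sigma'_2$, locate a pair of vertices, one in each component, that become completely separated after deleting $\bf e'$ resp.\ $\bf f'$; the fusion graph $\mathrm{fus}$ of these two vertices is (isomorphic to) $\Sigma$ up to a dipole move, so by Lemma \ref{1}(a) $\mathbb S^n\cong|K(\Sigma)|\cong M'_1\# M'_2$. Since the only way a connected sum of two closed $n$-manifolds can be $\mathbb S^n$ is $M'_1=M'_2=\mathbb S^n$ (the $n$-sphere is prime and has no nontrivial factorizations — each summand must itself be $\mathbb S^n$), both $\Sigma'_1$ and $\Sigma'_2$ represent $\mathbb S^n$, which is the claim.

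The main obstacle I anticipate is the middle step: carefully controlling what happens to the $\hat b$-residues (for $b\ne c$) under the switching and seeing that the switching restricted to such a residue is again the switching of a $\rho$-pair \emph{one dimension down}, so that the inductive hypothesis (Lemma \ref{1n} at step $n-1$) genuinely applies. This is a bookkeeping argument about which cycles/residues of $\Sigma$ are affected, and getting the colour indices and the ``$\rho_{n-1}$ inside a residue'' correspondence exactly right — especially distinguishing the $\rho_n$ and $\rho_{n-1}$ sub-situations inside the residue — is the delicate part; the rest is the two applications of Lemma \ref{1} and the primeness of $\mathbb S^n$.
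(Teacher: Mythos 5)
Your overall skeleton --- induction on $n$ via the residues to show $\Sigma'$ is still a gem, then Lemma \ref{1}(a) plus primeness of $\mathbb S^n$ to identify the two summands --- is essentially the paper's skeleton as well (the paper packages the residue induction as Lemma \ref{2n}, step $n$). The genuine gap is your argument that $\Sigma'$ has \emph{at least} two components. The Euler-characteristic / cycle-count bookkeeping you propose is exactly the paper's argument for the base case $n=2$, where it works because every closed surface has $\chi\le 2$ with equality only for $\mathbb S^2$, so $\chi(\Sigma')=\chi(\Sigma)+2=4$ forces two components. For $n\ge 3$ no such numerical criterion exists: the number of components of an $(n+1)$-coloured graph is not a function of the bicoloured-cycle counts $g_{ij}$ and the order, and the very same local change of cycle counts (namely $g_{\{c,i\}}$ increasing by one for each $i\ne c$, all other $g_{\{i,j\}}$ unchanged) occurs when one switches a $\rho_n$-pair in a crystallization of a handle, where the switched graph stays \emph{connected} (this is case (b) of Theorem \ref{switchrn} together with its final ``Moreover'' clause). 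So no purely combinatorial count can decide disconnection; a topological input about $\mathbb S^n$ is unavoidable.

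The missing ingredient is the paper's fusion argument: form $\tilde\Sigma$ from $\Sigma'$ by adding a blob $({\bf A},{\bf B})$ on the new edge ${\bf f}'$, and check that ${\bf e}(0)$ and ${\bf B}$ are completely separated in $\tilde\Sigma$ with $\Sigma\cong\tilde\Sigma\,\text{fus}({\bf e}(0),{\bf B})$ (this is Lemma \ref{2n}(ii), proved from the inductive hypothesis). If $\Sigma'$ were connected, Lemma \ref{1}(b) would give $\mathbb S^n\cong M'\#\mathbb H$, which is impossible since the sphere is handle-free. This forces the split, after which your Lemma \ref{1}(a)-plus-primeness argument finishes exactly as you describe. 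Two smaller slips in your middle step: the $\hat c$-residues need no lemma at all, since only $c$-coloured edges are moved; and inside an $\hat{\imath}$-residue with $i\ne c$ the pair $R$ is always a $\rho_{n-1}$-pair of that $n$-coloured residue, i.e.\ the top type one dimension down, so only that single case of the inductive hypothesis is ever invoked.
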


\medskip

Let now $\G, R=(\bf e, \bf f), \G'$ be as in the statement of
Theorem \ref{switchrn}. Recall that ($n$ being $\geq 3$) any
orientation of $\bf e$ induces a coherent orientation on $\bf f$. As
in Section 2, let ${\bf e(0)}, {\bf f(0)}, {\bf e(1)}$ and ${\bf
f(1)},$ be the ends of $\bf e$ and $\bf f$, so that $\bf e$ is
directed from $\bf e(0)$ to $\bf e(1)$ and $\bf f$ is directed from
$\bf f(0)$ to $\bf f(1).$ Furthermore, after the switching, the new
edges $\bf e', \bf f'$ of $\G'$ join $\bf e(0)$ with $\bf f(1)$ and
$\bf e(1)$ with $\bf f(0)$ respectively. Denote by $\tilde \G$ the
$(n+1)$-coloured graph obtained by adding a blob (i.e. an
$n$-dipole), with vertices $\bf A$ and $\bf B$ on the edge $\bf f'$
of $\G'$ (see Figure 2)

\medskip
\begin{lemma}\label{2n} -- {\bf step n} \, \, With the above notations, if
 $\G$ is a gem of a (connected) $n$-manifold $M$, $n\geq 3$, then:
\begin{itemize}
\item[(i)] $\G'$ (hence also $\tilde \G$) is a gem of a (possibly disconnected)
$n$-manifold $M'$;
\item[(ii)] $\bf e(0)$ and $\bf B$ are two completely separated vertices of $\tilde \G$;
moreover $\G$ coincides with $\tilde \G \text{fus}(\bf e(0),\bf B).$
\end{itemize}
\end{lemma}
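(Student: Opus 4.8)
The plan is to work residue by residue: in each case I would reduce the $\rho_n$-pair $R$ to a top-type pair inside a single residue and appeal to step $n-1$ of Lemma~\ref{1n}. Write ${\bf u}={\bf e}(0)$, ${\bf v}={\bf e}(1)$, ${\bf p}={\bf f}(0)$, ${\bf q}={\bf f}(1)$; since ${\bf e}\neq{\bf f}$ are both coloured $c$ and every vertex carries a unique $c$-edge, these are four distinct vertices, and after switching the edges ${\bf e}',{\bf f}'$ join ${\bf u}$ to ${\bf q}$ and ${\bf v}$ to ${\bf p}$. Fix $d\neq c$. As $n\geqslant 3$, $\Delta_n$ has at least four colours, so there is some $i\in\Delta_n\setminus\{c,d\}$; then $\G_{\{c,i\}}({\bf e})=\G_{\{c,i\}}({\bf f})$ is a single $\{c,i\}$-cycle lying inside one $\hat d$-residue $\Xi$ of $\G$, so ${\bf u},{\bf v},{\bf p},{\bf q}\in V(\Xi)$ and $\Xi$ is the only $\hat d$-residue meeting $\{{\bf u},{\bf v},{\bf p},{\bf q}\}$. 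Since $\G$ is a gem of an $n$-manifold and $\mathbb S^{n-1}$ is orientable, $\Xi$ is a bipartite $n$-coloured gem of $\mathbb S^{n-1}$, and $R$ is a top-type pair of $\Xi$ (a $\rho_{n-1}$-pair of $\Xi$), because $\Xi_{\{c,j\}}({\bf e})=\G_{\{c,j\}}({\bf e})=\G_{\{c,j\}}({\bf f})=\Xi_{\{c,j\}}({\bf f})$ for every colour $j\neq c$ of $\Xi$. A routine check shows that, $\Xi$ and all $\hat c$-residues being bipartite, the switching used to form $\G'$ (the canonical one of Case (A) or Case (B$_2$)) restricts on $V(\Xi)$ to the canonical switching of $R$ in $\Xi$; call the resulting graph $\Xi'$, which by step $n-1$ of Lemma~\ref{1n} splits into two components $\Xi'_1,\Xi'_2$, each a gem of $\mathbb S^{n-1}$.

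For (i): switching rewires only $c$-coloured edges, so $\G'_{\hat c}=\G_{\hat c}$ and every $\hat c$-residue of $\G'$ still represents $\mathbb S^{n-1}$; and for each $d\neq c$ the $\hat d$-residues of $\G'$ are those $\hat d$-residues of $\G$ distinct from $\Xi$ (unchanged) together with $\Xi'_1$ and $\Xi'_2$, hence all represent $\mathbb S^{n-1}$. Thus $\G'$ is a gem of a (possibly disconnected) $n$-manifold $M'$. The blob added on ${\bf f}'$ is an $n$-dipole (its two vertices ${\bf A},{\bf B}$ lie in distinct $\{c\}$-residues by construction), so adding it is a dipole move and, by \cite{FG}, $\tilde\G$ is again a gem of $M'$.

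For (ii), insert the blob on ${\bf f}'={\bf v}{\bf p}$ with ${\bf A}$ the $c$-neighbour of ${\bf v}$ and ${\bf B}$ the $c$-neighbour of ${\bf p}$; then in $\tilde\G$ the vertex ${\bf u}$ is $c$-adjacent to ${\bf q}$ (via ${\bf e}'$) with all its other adjacencies as in $\G$, while ${\bf B}$ is $c$-adjacent to ${\bf p}$ and $i$-adjacent to ${\bf A}$ for every $i\neq c$, and ${\bf A}$ is $c$-adjacent to ${\bf v}$ and $i$-adjacent to ${\bf B}$. Performing $\text{fus}({\bf u},{\bf B})$ joins the hanging $c$-edges at ${\bf q}$ and at ${\bf p}$ into the edge ${\bf q}{\bf p}$ — which is precisely the old ${\bf f}$ — and, for each $i\neq c$, joins the hanging $i$-edge at the old $i$-neighbour of ${\bf u}$ to the hanging $i$-edge at ${\bf A}$; a direct check of adjacencies shows the resulting coloured graph is carried onto $\G$ by the bijection sending ${\bf A}\mapsto{\bf e}(0)$ and fixing every other vertex, whence $\G=\tilde\G\,\text{fus}({\bf e}(0),{\bf B})$. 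It remains to see that ${\bf e}(0)$ and ${\bf B}$ are completely separated in $\tilde\G$. For $d=c$: both ${\bf A}$ and ${\bf B}$ have all their non-$c$ edges running between one another, so the $\hat c$-residue of $\tilde\G$ containing ${\bf B}$ is the two-vertex graph $\{{\bf A},{\bf B}\}$, which omits ${\bf e}(0)$. For $d\neq c$: switching the top-type pair $R$ closes the two arcs of $\G_{\{c,i\}}({\bf e})\setminus\{{\bf e},{\bf f}\}$ by ${\bf e}'$ and ${\bf f}'$ into two separate cycles, one through ${\bf u}$ and ${\bf q}$, the other through ${\bf v}$ and ${\bf p}$; hence ${\bf u}$ lies in the component $\Xi'_1$ carrying ${\bf e}'$, while ${\bf v},{\bf p}$ — and therefore the whole blob inserted on ${\bf f}'$ — lie in $\Xi'_2$. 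Since $\Xi'_1\neq\Xi'_2$, the vertices ${\bf e}(0)$ and ${\bf B}$ lie in different $\hat d$-residues of $\tilde\G$. As $d$ was arbitrary, they are completely separated, which proves (ii).

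The step that needs most care is the compatibility claim in the first paragraph, namely that the prescribed switching of $R$ in $\G$ genuinely restricts to the canonical switching of $R$ in the residue $\Xi$ — so that step $n-1$ of Lemma~\ref{1n} may be applied to $\Xi$ — together with the companion bookkeeping in the complete-separation step of deciding which of the two $\mathbb S^{n-1}$-pieces of $\Xi'$ inherits ${\bf e}(0)$ and which inherits the blob. The remaining verifications are routine inspections of adjacencies and of $\hat d$-residues.
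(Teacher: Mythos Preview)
Your argument is correct and follows the paper's approach exactly: part (i) is proved residue by residue by invoking Lemma~\ref{1n} at step $n-1$ on the $\hat d$-residue $\Xi$ containing $R$, just as the authors do, and your treatment of part (ii) is in fact considerably more detailed than the paper's, which dismisses both the identification $\G=\tilde\G\,\text{fus}({\bf e}(0),{\bf B})$ and the complete-separation claim with ``as it is easy to check''. The only point you might make fully explicit is that ${\bf e}'$ and ${\bf f}'$ land in \emph{distinct} components of $\Xi'$ (your $\{c,i\}$-cycle computation alone shows they lie on distinct bicoloured cycles, not distinct components); but this is immediate from the connectedness of $\Xi$, since if both lay in one component the second component guaranteed by Lemma~\ref{1n} would already be a component of $\Xi$ itself.
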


\bigskip
\smallskip
\centerline{\scalebox{0.6}{\includegraphics{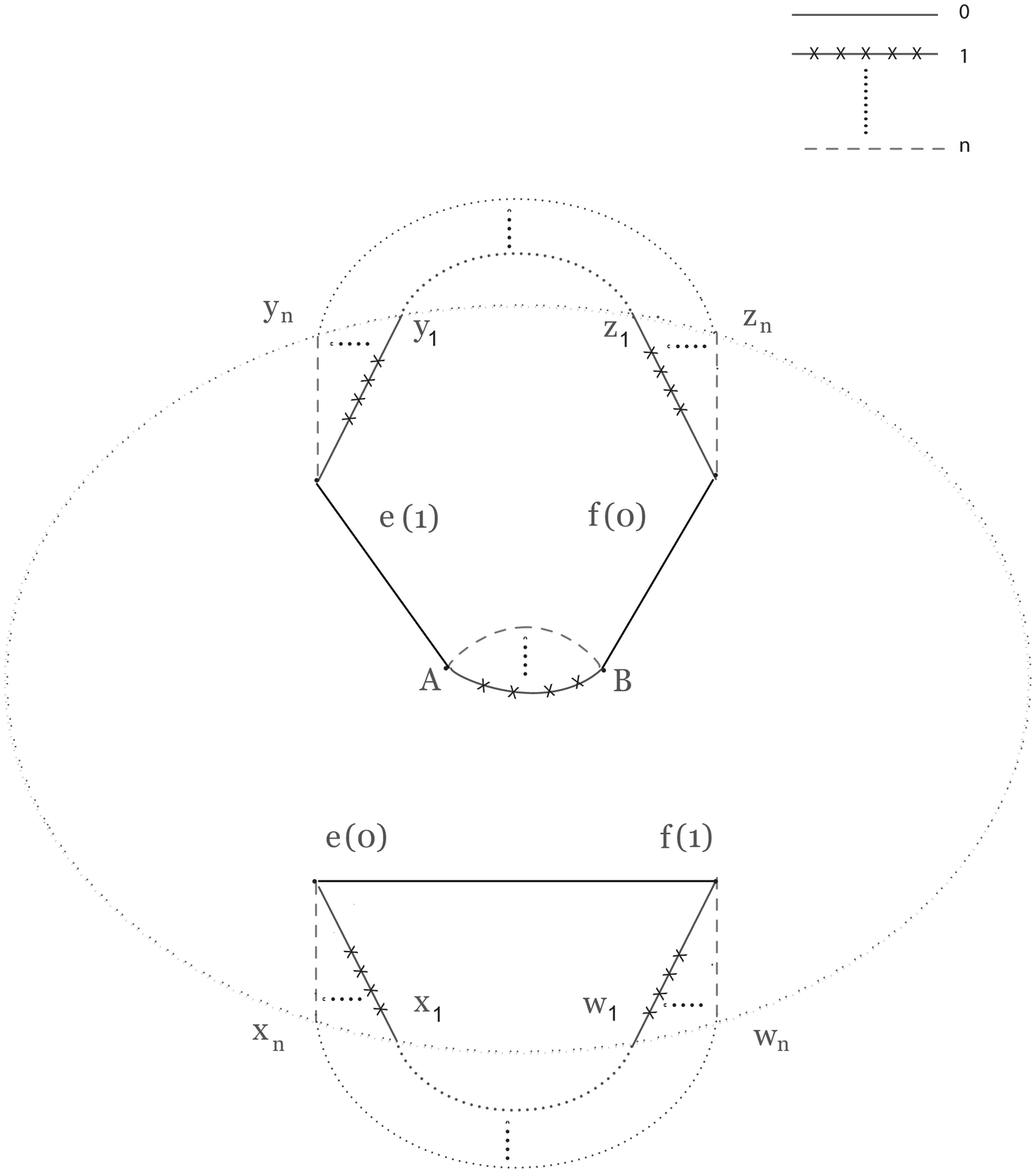}}}
\bigskip
\centerline{\bf Figure 2}

\bigskip

\medskip

\begin{proof} First of all, we repeat here the proof of Lemma \ref{1n}, step 2, which is exactly Corollary 13 of \cite{B}.

Let $(\Sigma, \sigma)$ a $3$-coloured, bipartite graph representing
$\mathbb S^2$. Let $R$ be a $\rho_2$- pair in $\Sigma$ involving
colour $c \in \Delta_2$. Then, by switching $R$ in the only possible
way, we obtain a new graph $(\Sigma', \sigma')$, either connected or
with two connected components. Moreover, if we denote by $d, k$ the
further two colours of $\Delta_2$, then $\Sigma'$ has the same
number of $(d,k)$- coloured cycles ($\hat c$-residues) and one more
$(c,h)$- coloured cycle ($\hat h$-residue), for $h = d,k.$

Hence $\chi(\Sigma') = \chi(\Sigma) + 2 = 4$. This implies that $\Sigma'$ must have two connected components, both representing $\mathbb S^2$.

Now, assuming Lemma \ref{1n}, step $n-1$, we prove Lemma
\ref{2n}, step $n$.

For, let us suppose $\G$ to be a gem of the $n$-manifold $M$. As a
consequence, for each colour $i \in \Delta_n$, all $\i$-residues are
gems of $\mathbb S^{n-1}$. Now, suppose $R=(\bf e, \bf f)$ to be a
$\rho_n$-pair of $\G$, involving color $c$, whose switching produces
the graph $\G'$.

Of course, the switching of $R$ has no effects on the $\hat
c$-residues of $\G$. Hence, each $\hat c$-residue of $\G'$ is
colour-isomorphic to the corresponding one of $\G$, and therefore
represents $\mathbb S^{n-1}$. Let now $i$ be any colour different
from $c$ and let $\Xi$ be the $\i$-residue containing $R$. Of
course, $R$ is a $\rho_{n-1}$-pair of $\Xi$ (where $\Xi$ is a gem of
$\mathbb S^{n-1}$). Hence, by Lemma  \ref{1n}, step $n-1$, the
switching of $R$ splits $\Xi$ into two new $\i$-residues of $\G'$,
both representing $\mathbb S^{n-1}$.

Since all $\i$-residues of $\G$, different from $\Xi$, are left
unaltered by the switching of $R$, $\G'$ is again a gem of a
$n$-manifold $M'$ (with either one or two connected components). Let
now $\tilde \G$ be obtained from $\G'$ by adding a blob (i.e. an
$n$-dipole $\Theta =(\bf A, \bf B)$) on the edge $\bf f'$, with
endpoints $\bf e(0), \bf f(1)$. Of course, $\tilde \G$ is again a
gem of $M'$ and, as it is easy to check, $\tilde \G \text{fus}(\bf
e(0), \bf B)$ is colour-isomorphic to $\G'$ (where the vertex $\bf
A$ plays the role of $\bf e(0)$).

Now, assuming Lemma \ref{2n}, step $n$, we prove Lemma \ref{1n},
step $n$. Let $\Sigma, R=({\bf e},{ \bf f}), \Sigma'$ be as in the
statement of Lemma \ref{1n}. Let further $\tilde \Sigma$ be obtained
by adding a blob $\Theta =(\bf A, \bf B)$ on the edge $\bf f'$ of
$\Sigma'$. Hence, by Lemma \ref{1n}, step $n$, $\Sigma'$ and $\tilde
\Sigma$ are both gems of an $n$-manifold $M'$; moreover ${\bf e}(0)$
and $\bf B$ are completely separated vertices of $\tilde \Sigma$,
and $\Sigma$ is isomorphic to $\tilde \Sigma \text{fus}(\bf e(0),
\bf B)$. If $\Sigma'$ (hence also $\tilde \Sigma$) is connected,
then, by Lemma \ref{1}, the manifold represented by $\Sigma$ must
have a handle $\mathbb H$ as a direct summand, but this is
impossible, since $\Sigma$ represents $\mathbb S^n$, by hypothesis.
Hence $\Sigma'$ (and $\tilde \Sigma$) must split into two components
$\Sigma'_1$, $\Sigma'_2$ say, representing two connected
$n$-manifolds $M'_1, M'_2$ respectively, so that $\mathbb S^n \cong
M'_1 \# M'_2$. But this implies that both $M'_1, M'_2$ are gems of
$\mathbb S^n$, too.

This concludes the proof of Lemmas \ref{1n} and \ref{2n}.
\end{proof}

\smallskip

\begin{proof} (of Theorem \ref{switchrn})
The proof of Theorem \ref{switchrn}, (a) and (b), is now a direct
consequence of Lemma \ref{2n}, Step $n$, and  Lemma \ref{1}.

\smallskip

If, further, $\G_{\hat c}$ is connected, $c$ being the colour
involved in $R$ (in particular, if $\G$ is a crystallization of
$M$), then $\G'$ must be connected, too, and therefore $M \cong M'\#
\mathbb H$.
\end{proof}

\bigskip
\smallskip
\centerline{\scalebox{0.8}{\includegraphics{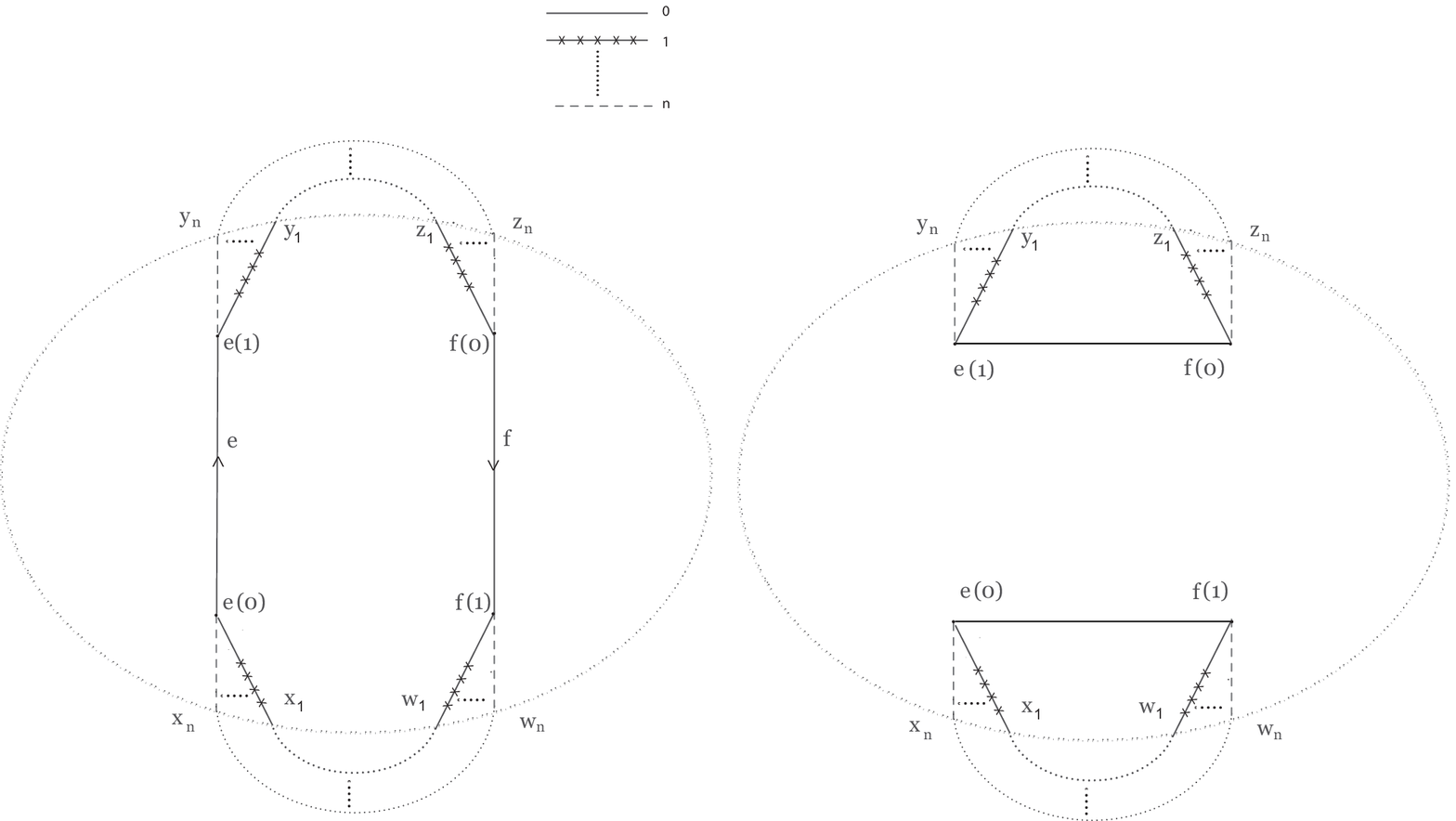}}}
\bigskip
\, \, \, \, \, {\bf Figure 2a \, \, \, \, \, \, \, \, \, \, \, \, \,
\, \, \, \, \, \, \, \, \, \, \, \, \, \, \, Figure 2b}

\bigskip

\smallskip

As a consequence of Theorem \ref{switchrn} and of Corollary 13 of
\cite {B}, we have the following
\begin{corollary}\label{Sn} If $(\Sigma, \sigma)$ is a
crystallization of the $n$-sphere $\mathbb S^n$, $n \geq 2$ then it can't
contain any $\rho_n$-pair.
\end{corollary}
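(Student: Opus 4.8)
The plan is to argue by contradiction and to split the two cases $n=2$ and $n\geq 3$, the former following from Corollary 13 of \cite{B} (recalled, via an Euler characteristic computation, inside the proof of Lemma \ref{1n}, step 2). So I would suppose that $(\Sigma,\sigma)$ is a crystallization of $\mathbb{S}^n$ which nonetheless contains a $\rho_n$-pair $R=({\bf e},{\bf f})$, say involving colour $c$, and derive a contradiction.

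For $n\geq 3$, let $(\Sigma',\sigma')$ be a switching of $R$. Since $\Sigma$ is a crystallization, the final clause of Theorem \ref{switchrn} forces $\Sigma'$ to be connected, so that only case (b) of that theorem can occur: $\Sigma'$ is a gem of an $n$-manifold $M'$ with $\mathbb{S}^n\cong M'\#\mathbb{H}$, where $\mathbb{H}$ is one of the two $(n-1)$-sphere bundles over $\mathbb{S}^1$. It then remains to observe that $\mathbb{S}^n$ admits no handle as a connected summand; for $n\geq 3$ this is immediate from the Seifert--van Kampen theorem, which gives $\pi_1(M'\#\mathbb{H})\cong\pi_1(M')*\pi_1(\mathbb{H})$ with $\pi_1(\mathbb{H})\cong\mathbb{Z}$ (the $(n-1)$-sphere fibre being simply connected), whence $\pi_1(M'\#\mathbb{H})\neq 1=\pi_1(\mathbb{S}^n)$ — alternatively one may simply compare first homology, $H_1(M'\#\mathbb{H})\neq 0=H_1(\mathbb{S}^n)$. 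This contradiction rules out the existence of $R$.

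The argument is almost entirely bookkeeping, with all the substantial work already contained in Theorem \ref{switchrn}; the only point to be checked with some care is that ``$\Sigma$ contracted'' genuinely forces $\Sigma'$ to be connected — and it does, since a switching acts only on the $c$-coloured edges and hence leaves $\Sigma_{\hat c}$ (which is connected, $\Sigma$ being a crystallization) untouched, so that case (a) of Theorem \ref{switchrn} is excluded and case (b) delivers the forbidden handle. For $n=2$ Theorem \ref{switchrn} is not available, but the computation of \cite{B} shows that switching a candidate $\rho_2$-pair in a crystallization of $\mathbb{S}^2$ raises the Euler characteristic to $4$ and therefore disconnects the graph, again contradicting connectedness of a crystallization; equivalently, from $\sum_{i<j}g_{ij}=\chi+p$ and $g_{\hat c}=1$ for all $c$ one sees that a crystallization of $\mathbb{S}^2$ must have only two vertices, hence a single edge of each colour, and so no pair of equally coloured edges at all.
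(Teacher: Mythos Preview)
Your argument is correct and follows exactly the route the paper indicates: for $n\geq 3$ you invoke the final clause of Theorem~\ref{switchrn} (crystallization $\Rightarrow$ $\Sigma'$ connected $\Rightarrow$ $\mathbb S^n\cong M'\#\mathbb H$, impossible since $\mathbb S^n$ is handle-free), and for $n=2$ you fall back on Corollary~13 of \cite{B}. Your alternative observation that a crystallization of $\mathbb S^2$ necessarily has only two vertices---hence no pair of equally coloured edges at all---is a pleasant self-contained shortcut for the surface case.
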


\smallskip

\begin{theorem}\label{switchrn-1} Let $(\G, \g)$ be a gem of a (connected) $n$-manifold $M$,
$R=(\bf e, \bf f)$ be a $\rho_{n-1}$-pair of $\G$ and let $(\G',
\g')$ be obtained by switching $R$. Then $\G'$ is a gem of the same
manifold $M$.
\end{theorem}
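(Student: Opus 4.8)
The plan is to mirror the strategy used for Theorem~\ref{switchrn}: reduce the statement to an analysis of the residues of $\G'$ one colour at a time, and then invoke a suitable "step $n$" lemma for spheres together with Lemma~\ref{1}. First I would set up the notation exactly as in Case (B$_1$) and in the discussion preceding Lemma~\ref{2n}: let $c$ be the colour involved in the $\rho_{n-1}$-pair $R=(\bf e,\bf f)$ and $d$ the colour not involved, with $\bf e(0),\bf e(1),\bf f(0),\bf f(1)$ the endpoints oriented compatibly inside the bipartite $\hat d$-residue $\Xi$ containing both $\bf e$ and $\bf f$, and with the new edges $\bf e',\bf f'$ of $\G'$ joining $\bf e(0)$ with $\bf f(1)$ and $\bf e(1)$ with $\bf f(0)$. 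By Theorem~\ref{n.c.c.}(a), $\G'$ is connected, so there is nothing to worry about on the splitting side; the whole content is showing that every $\hat i$-residue of $\G'$ still represents $\mathbb S^{n-1}$, and that the resulting manifold is $M$ itself rather than $M\#\mathbb H$.

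Next I would check the residues colour by colour. For any colour $i\neq c,d$, the condition $\G_{\{c,i\}}(\bf e)=\G_{\{c,i\}}(\bf f)$ forces $\bf e$ and $\bf f$ to lie in a common $\hat i$-residue, and in fact the switching does nothing visible outside the $\hat d$-residue $\Xi$ except reorganise how $\Xi$ sits inside larger residues; concretely, each $\hat i$-residue of $\G'$ with $i\neq d$ is colour-isomorphic to the corresponding $\hat i$-residue of $\G$ (the two $c$-edges are simply rerouted within it, and $R$ restricted to that residue is again a $\rho$-pair), hence represents $\mathbb S^{n-1}$. The only colour requiring real work is $i=d$: the $\hat d$-residues of $\G'$ are obtained from those of $\G$ by switching $R$ inside $\Xi$, where $R$ is now a $\rho_n$-pair of the $n$-coloured graph $\Xi$ (it involves colour $c$ and, by $(b_2)$, meets every remaining colour in a single bicoloured cycle). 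Since $\G$ is a gem of $M$, $\Xi$ is a gem of $\mathbb S^{n-1}$, so by Lemma~\ref{1n} at step $n-1$ the switching splits $\Xi$ into two components, both representing $\mathbb S^{n-1}$. Thus all $\hat d$-residues of $\G'$ represent $\mathbb S^{n-1}$ as well, and $\G'$ is a gem of some (connected) $n$-manifold $M'$.

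It remains to identify $M'$ with $M$. Here I would again use the blob trick: let $\tilde\G$ be obtained from $\G'$ by adding a blob $\Theta=(\bf A,\bf B)$ on the edge $\bf f'$, so that $\tilde\G$ is also a gem of $M'$, and check — exactly as in the proof of Lemma~\ref{2n} — that $\bf e(0)$ and $\bf B$ are completely separated in $\tilde\G$ and that $\tilde\G\,\mathrm{fus}(\bf e(0),\bf B)$ is colour-isomorphic to $\G$. By Lemma~\ref{1}(b), $\G$ is then a gem of $M'\#\mathbb H$ for some handle $\mathbb H$. The punch line, and the step I expect to be the main obstacle, is upgrading this to $M\cong M'$: for a $\rho_{n-1}$-pair the splitting of $\Xi$ occurs \emph{one dimension down}, so the handle one would naïvely produce is killed — one has to verify that the completely-separated pair $(\bf e(0),\bf B)$ actually lies in a single $\hat d$-residue sphere that got split, so that the fusion reattaches two $\mathbb S^{n-2}$-bounded pieces within a common $\mathbb S^{n-1}$ and adds nothing. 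Equivalently, I would show directly that $\G'$ and $\G$ are related by a sequence of dipole moves (the blob cancellation plus, crucially, a dipole that exists precisely because $R$ is a $\rho_{n-1}$-pair — the two cycles $\G_{\{c,d\}}(\bf e)\neq\G_{\{c,d\}}(\bf f)$ reappear in $\G'$ merged into one, which is exactly the signature of a cancellable dipole being present), and appeal to \cite{FG}. Checking that this dipole is genuine — i.e. that its two endpoints lie in different residues of the complementary colours — is the delicate combinatorial point, and it is where the hypothesis $(b_1)$, $\G_{\{c,d\}}(\bf e)\neq\G_{\{c,d\}}(\bf f)$, does all the heavy lifting.
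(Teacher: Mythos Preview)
Your plan transplants the $\rho_n$ machinery (Lemmas~\ref{1n}, \ref{2n} and Lemma~\ref{1}) to the $\rho_{n-1}$ case, but the transplant fails at two concrete points. First, for $i\neq c,d$ the $\hat i$-residues of $\G'$ are \emph{not} colour-isomorphic to those of $\G$: they contain the two $c$-edges and are genuinely altered by the switching. Inside such a residue $R$ becomes a $\rho_{n-2}$-pair (colour $d$ still not involved), so to conclude that the switched residue represents $\mathbb S^{n-1}$ you would need Theorem~\ref{switchrn-1} one dimension down --- an induction you have not set up, and whose base case ($\rho_1$-pairs in $3$-coloured gems of $\mathbb S^2$) is not treated anywhere in the paper. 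Second, and fatally for the blob/fusion route, the vertices ${\bf e}(0)$ and ${\bf B}$ are \emph{not} completely separated in your $\tilde\G$. In the $\rho_n$ situation every $\hat i$-residue ($i\neq c$) containing $R$ splits in two after switching, which is precisely what separates ${\bf e}(0)$ from ${\bf B}$; for a $\rho_{n-1}$-pair only the $\hat d$-residue splits, while for each $i\neq c,d$ the $\hat i$-residue stays connected (Theorem~\ref{n.c.c.}(a) applied inside it), so ${\bf e}(0)$ and ${\bf B}$ share the same $\hat i$-residue. Lemma~\ref{1} therefore does not apply, and there is no handle to cancel. Your fallback (``blob cancellation plus one further dipole'') is not a proof either: cancelling the blob returns $\G'$, and a single extra dipole in your two-extra-vertex $\tilde\G$ cannot turn $\G'$ into $\G$.

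The paper's argument is different in kind: it builds an intermediate graph $\tilde\G$ with \emph{four} new vertices ${\bf A},{\bf A'},{\bf B},{\bf B'}$, so arranged that $\tilde\G$ reduces to $\G$ by cancelling a $1$-dipole $({\bf A},{\bf A'})$ followed by a $2$-dipole $({\bf B},{\bf B'})$, and reduces to $\G'$ by cancelling two $(n-1)$-dipoles $\Theta_1=({\bf A'},{\bf e}(0))$ and $\Theta_2=({\bf B'},{\bf f}(1))$. The hypothesis $(b_1)$, $\G_{\{c,d\}}({\bf e})\neq\G_{\{c,d\}}({\bf f})$, is exactly what certifies that $({\bf A},{\bf A'})$ is a genuine $1$-dipole (the two $\hat d$-residues are distinct), while the $(n-1)$-dipole condition comes from the $\{c,d\}$-residue through ${\bf A'},{\bf B'}$ being a length-four cycle on ${\bf A},{\bf B},{\bf A'},{\bf B'}$. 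This yields $|K(\G)|\cong|K(\tilde\G)|\cong|K(\G')|$ directly via \cite{FG}, with no induction, no residue-by-residue sphere check, and no appeal to Lemma~\ref{1}.
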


\begin{proof} W.l.o.g., let us suppose $c=0$ to be the colour involved and $d=n$ the one not involved in $R$.
By Theorem 2, $\G'$ has the same number of connected components as
$\G$ and, by performing the switching, it is bipartite (resp. non-
bipartite) iff $\G$ is.

Consider the graph $\tilde \G$, obtained by replacing the
neighborhood of $R$ in $\G$ (Figure 3a), with the graph of Figure
3b. The switching of $R$ can be thought as the replacing of the
neighborhood of $R=(\bf e, \bf f)$ by the neighborhood of $R'=(\bf
e', \bf f')$ (see Figure 1a). Consider now the graph $\tilde \G$
obtained by replacing the above neighborhood by the graph of Figure
5, where $\Theta_1$ ($\Theta_2$ resp.) is formed by two vertices
$\bf A', \bf e(0)$ ($\bf B', \bf f(1)$ resp.) joined by $n-1$ edges
coloured $1,\ldots,n-1.$

\bigskip
\smallskip
\centerline{\scalebox{0.5}{\includegraphics{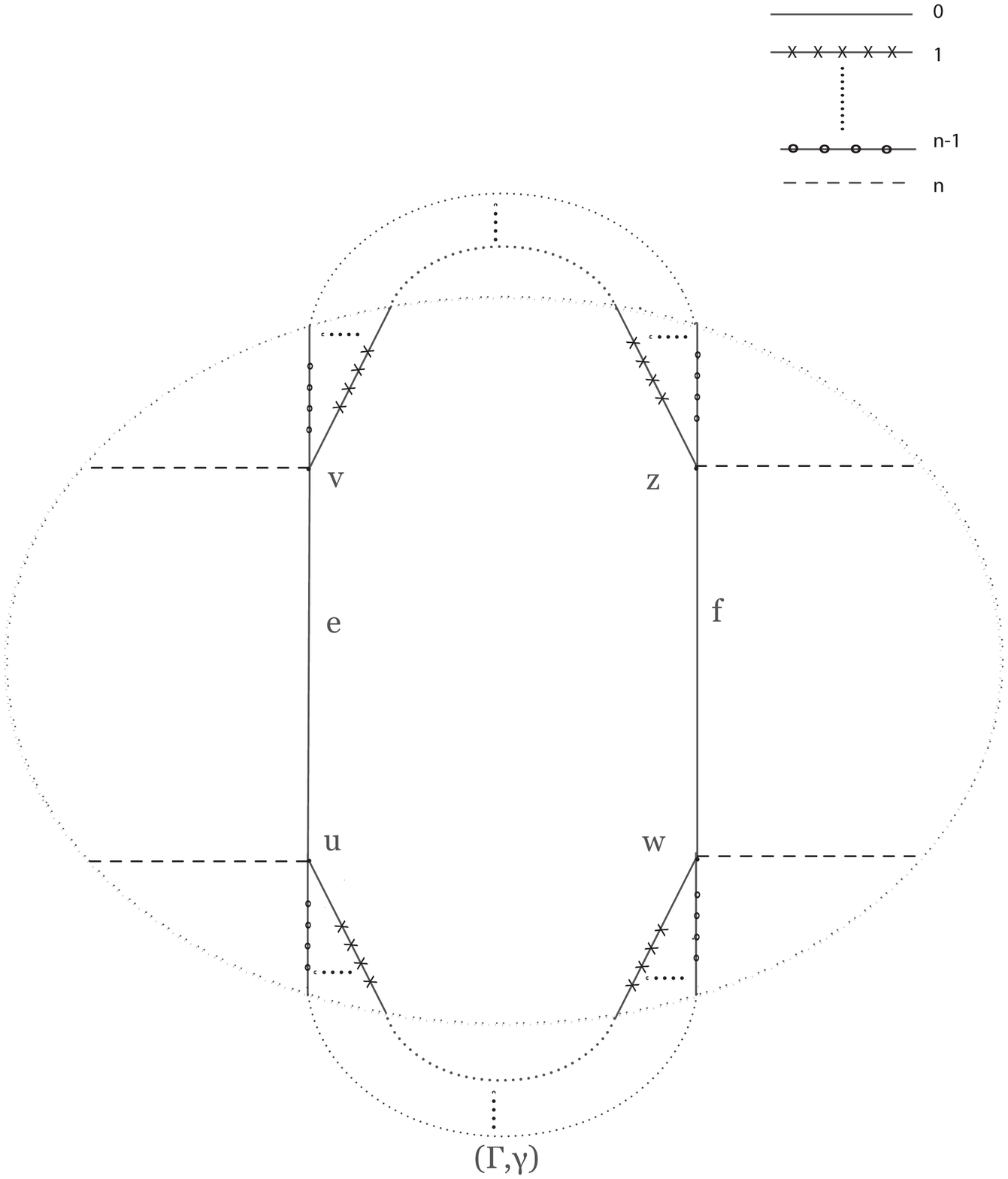}}}
\bigskip
\centerline{\bf Figure 3a}

\bigskip
\smallskip
\centerline{\scalebox{0.5}{\includegraphics{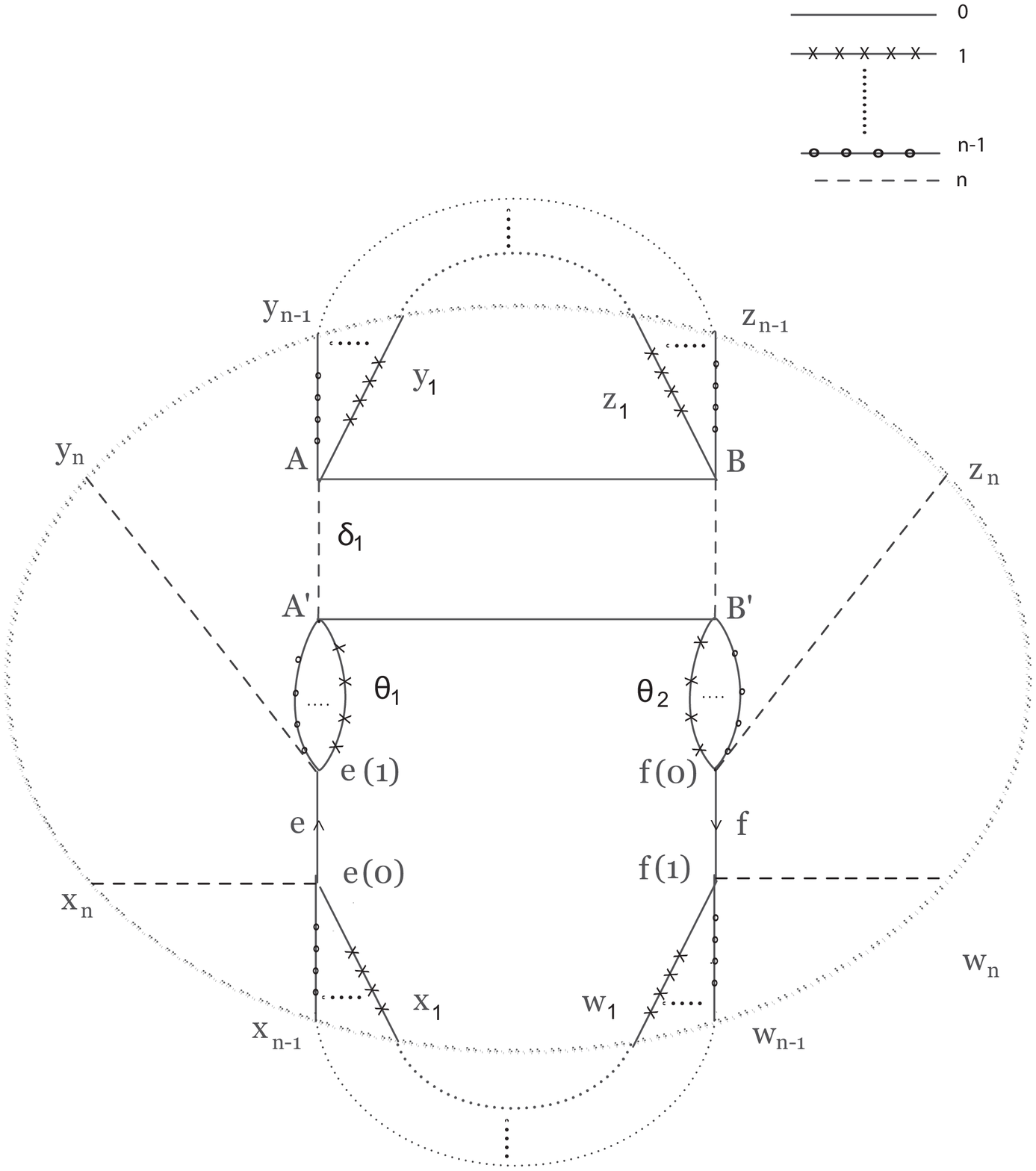}}}
\bigskip
\centerline{\bf Figure 3b}

\bigskip

We will describe two sequences of dipole moves, joining $\tilde \G$
with $\G$ and $\G'$ respectively, thus proving that $\G, \G'$ are
gems of PL-homeomorphic manifolds.

The first sequence starts by considering $\delta_1=(\bf A, \bf A')$,
which is a $1$-dipole. In fact, $\tilde \G_{\hat n}(\bf A') =
\delta_1$, whose further end is $\bf e(1)$; hence the ${\hat
n}$-residue $\tilde \G_{\hat n}(\bf A)$ is different from
$\delta_1.$ By deleting,  the $1$-dipole $\delta_1$ from $\tilde
\G$, yields a $2$-dipole $\delta_2$ with ends $\bf B, \bf B'$, in
fact $\tilde \G_{\hat n}(\bf B')$ is a multiple edge whose further
end is $\bf f(1)$ and which differs from the ${\hat n}$-residue
$\tilde \G_{\hat n}(\bf B)$. By cancelling $\delta_2$, too, we
obtain $\G$ (Figg. 3c and 3d).

$\Theta_1$ and $\Theta_2$ are $(n-1)$-dipoles, since the
$(0,n)$-residue containing $\bf A', \bf B'$ is a quadrilateral cycle
whose vertices are $\bf A, \bf B, \bf A', \bf B'$ only. By deleting
them from $\tilde \G$ (Figg. 3e and 3f), we obtain $\G'$.
\end{proof}

\bigskip

\bigskip
\smallskip
\centerline{\scalebox{0.8}{\includegraphics{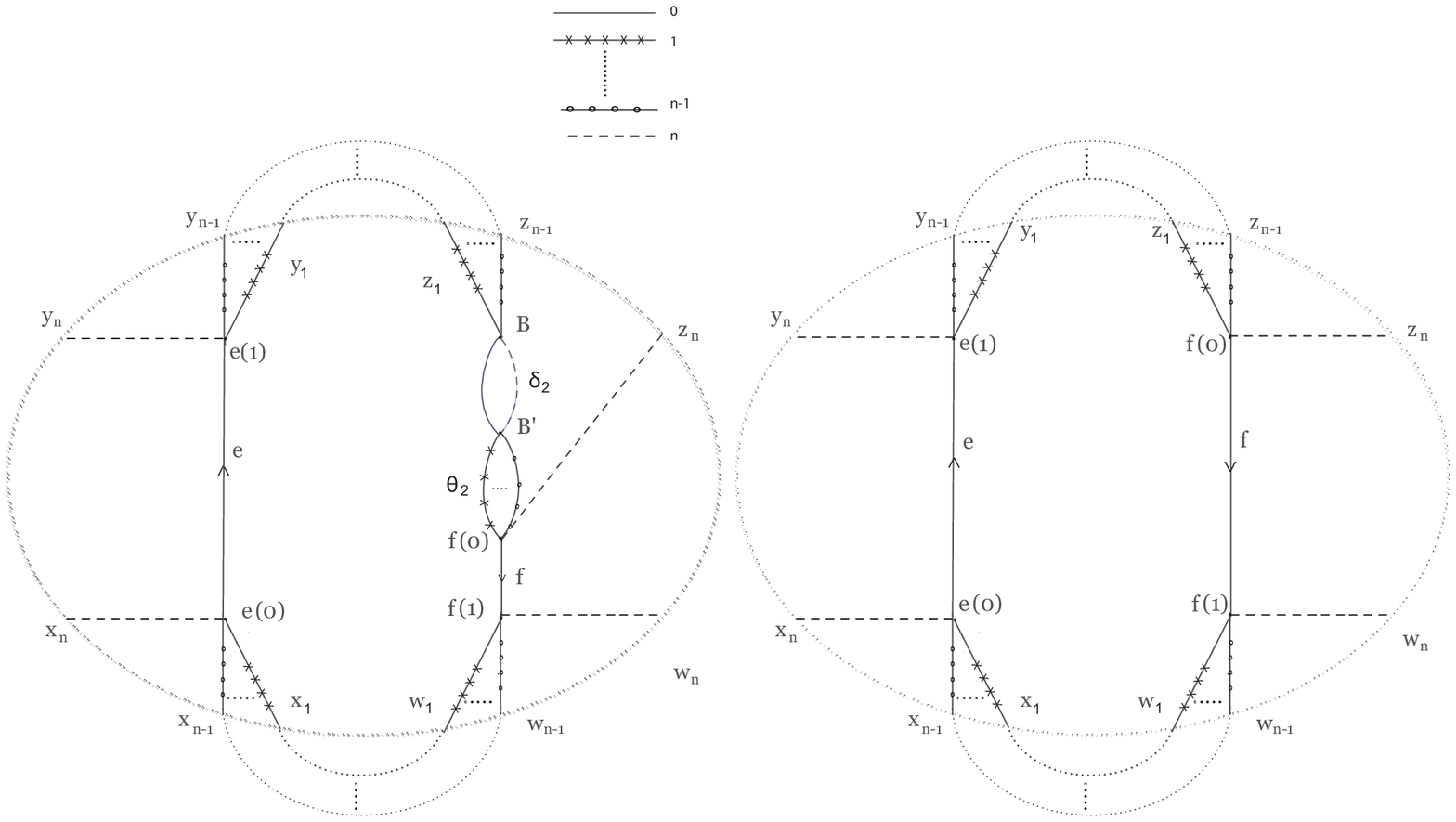}}}
\bigskip
\, \, \, \, \, \, {\bf Figure 3c \, \, \, \, \, \, \, \, \, \, \, \,
\, \, \, \, \, \, \, \, \, \, \, \, \, \, \, \, Figure 3d}

\bigskip

\bigskip
\smallskip
\centerline{\scalebox{0.8}{\includegraphics{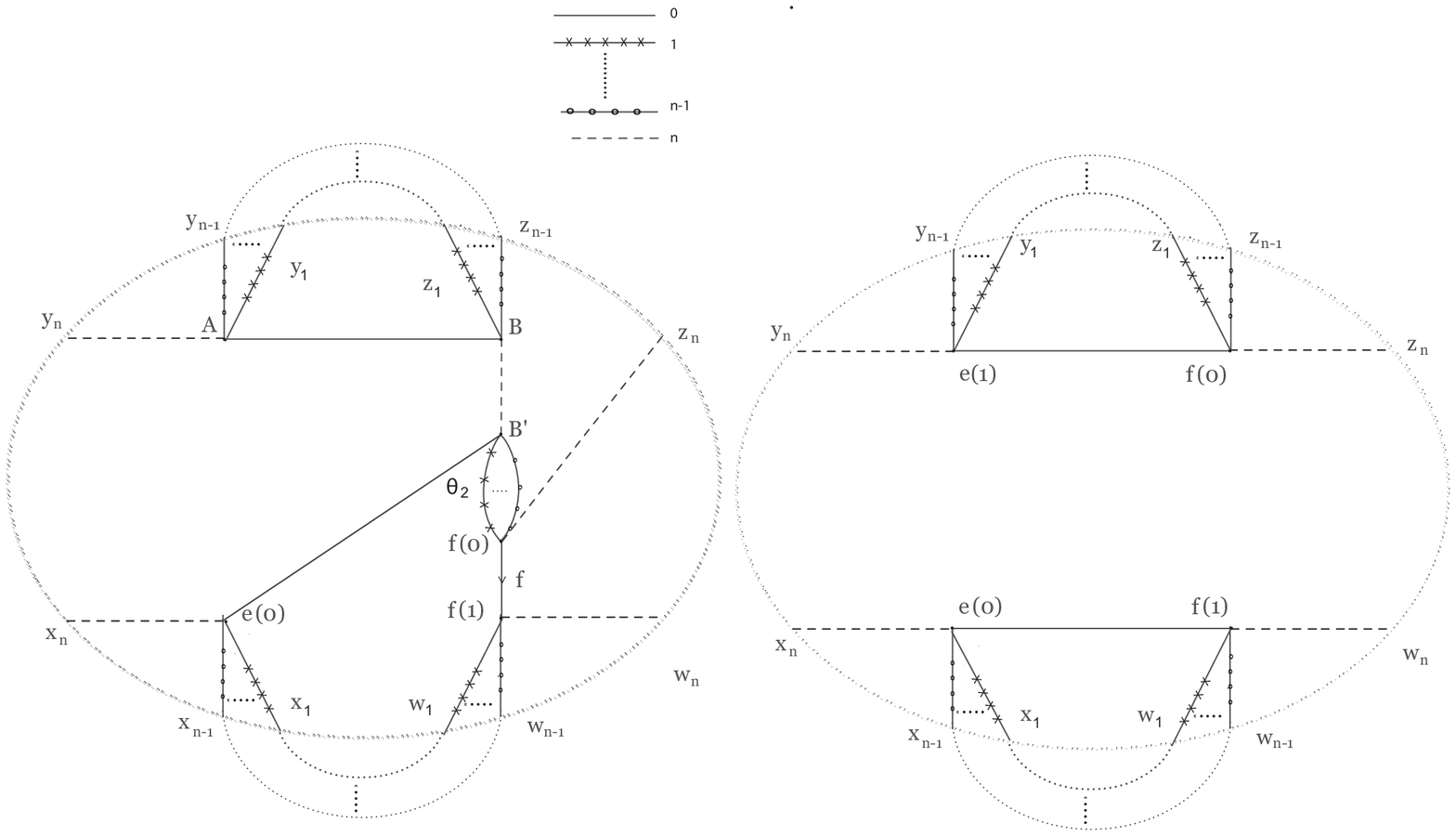}}}
\bigskip
\, \, \, \, \, \, {\bf Figure 3e \, \, \, \, \, \, \, \, \, \, \, \,
\, \, \, \, \, \, \, \, \, \, \, \, \, \, \, \, \, Figure 3f}

\bigskip

\section{Rigid gems}

\bigskip

\begin{definition} An $(n+1)$-coloured graph $(\G,\g)$, $n \geq 3$, is
called \textit{rigid} iff it has no $\rho$-pairs.
\end{definition}

\begin{theorem}\label{rigid} The $(n+1)$-coloured graph $(\G,\g)$, $n \geq 3$, is
rigid iff for each $i\in \Delta_n$, the graph $\G_{\hat{\imath}}$
has no $\rho_{n-1}$-pairs. \footnote{Note that, for $n=2$, the
concept of rigidity has no interest at all. In fact, if $\Gamma$ is
a $3$-coloured graph representing a closed surface, then it contains
$\rho$-pairs: $\rho_2$-pairs, if $\Gamma$ is a crystallization,
either $\rho_1$-pairs or $\rho_2$-pairs, otherwise. Hence, given any
closed surface $M^2$, it can't exist any rigid crystallization of
$M^2$}
\smallskip
\end{theorem}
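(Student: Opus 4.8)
The plan is to prove the two implications separately, exploiting the simple fact that a $\rho_{n-1}$-pair of $\G$, involving colour $c$ and not involving colour $d$, lives entirely inside a single $\hat d$-residue $\G_{\hat d}$, where it is again a $\rho_{n-1}$-pair (now with respect to the $n$ colours $\Delta_n\setminus\{d\}$), and that a $\rho_{n-1}$-pair of an $\hat\imath$-residue $\G_{\hat\imath}$ is automatically a $\rho$-pair of $\G$ itself.

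For the ``only if'' direction I would argue by contraposition: suppose some $\G_{\hat\imath}$ contains a $\rho_{n-1}$-pair $R=(\mathbf e,\mathbf f)$, say involving colour $c$ (so $c\in\Delta_n\setminus\{i\}$) and not involving some colour $d\in\Delta_n\setminus\{i,c\}$. Inside $\G_{\hat\imath}$ this means $\G_{\{c,d\}}(\mathbf e)\neq\G_{\{c,d\}}(\mathbf f)$ while $\G_{\{c,j\}}(\mathbf e)=\G_{\{c,j\}}(\mathbf f)$ for every $j\in\Delta_n\setminus\{i,c,d\}$. Now view $R$ as a pair of equally coloured edges of the big graph $\G$. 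For the colour $i$ itself, either $\G_{\{c,i\}}(\mathbf e)=\G_{\{c,i\}}(\mathbf f)$ or not; in the first case $R$ is either a $\rho_n$-pair of $\G$ (if also $\G_{\{c,d\}}(\mathbf e)=\G_{\{c,d\}}(\mathbf f)$ in $\G$, which however cannot happen since the $\{c,d\}$-cycles through $\mathbf e$ and $\mathbf f$ already differ inside the residue) — so in fact $R$ is a $\rho_{n-1}$-pair of $\G$, still not involving $d$; in the second case $R$ is a $\rho_{n-1}$-pair of $\G$ not involving $i$. Either way $\G$ has a $\rho$-pair, so it is not rigid.

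For the ``if'' direction, again by contraposition, assume $\G$ is not rigid and take a $\rho$-pair $R=(\mathbf e,\mathbf f)$ of $\G$ involving colour $c$; I must exhibit a colour $i$ and a $\rho_{n-1}$-pair of $\G_{\hat\imath}$. If $R$ is a $\rho_{n-1}$-pair of $\G$ not involving $d$, then $R\subset\G_{\hat d}$, and the bicoloured cycles $\G_{\{c,j\}}(\mathbf e)$ for $j\neq c,d$ are exactly the $\{c,j\}$-cycles computed inside $\G_{\hat d}$, so $R$ is a $\rho_{n-1}$-pair of $\G_{\hat d}$ (here I use $n\geq 3$, so that $\G_{\hat d}$ has at least three colours and the defining conditions are nonvacuous). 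If instead $R$ is a $\rho_n$-pair of $\G$, pick any colour $d\neq c$ and any colour $e\neq c,d$ (possible since $n\geq 3$); inside the residue $\G_{\hat e}$ all the cycles $\G_{\{c,j\}}(\mathbf e)$, $j\neq c,e$, still coincide with those of $\G$, hence equal the ones through $\mathbf f$, so $R$ is at least a $\rho_{n-1}$- or $\rho_n$-pair of $\G_{\hat e}$; a $\rho_n$-pair of $\G_{\hat e}$ is in particular a $\rho_{n-1}$-pair of $\G_{\hat e}$ (forget the distinguished colour $d$ as the ``not involved'' one), so in all cases some $\hat\imath$-residue contains a $\rho_{n-1}$-pair.

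The step I expect to require the most care is bookkeeping the definitions when passing between $\G$ and its residues: I must be careful that ``$\G_{\{c,i\}}(\mathbf e)=\G_{\{c,i\}}(\mathbf f)$'' is literally the same assertion whether computed in $\G$ or in a residue $\G_{\hat\imath}$ containing both edges (it is, since a $\{c,i\}$-coloured cycle uses no other colours), and that the notions of $\rho_n$- versus $\rho_{n-1}$-pair are relative to the ambient colour set — a $\rho_n$-pair of an $n$-coloured residue is only a $\rho_{n-1}$-pair of the $(n+1)$-coloured $\G$, and conversely. Once this dictionary is set up precisely, both directions are short; the hypothesis $n\geq 3$ enters exactly to guarantee enough spare colours $d,e$ so that the residue in question still carries the appropriate type of $\rho$-pair.
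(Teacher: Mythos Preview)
Your overall strategy---contraposition in both directions, passing a $\rho$-pair back and forth between $\G$ and its $\hat\imath$-residues---is exactly the paper's. The problem is a misreading of the indexing convention in the definition of $\rho_k$-pair, and it makes your ``only if'' direction incorrect.

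The subscript in $\rho_k$ counts the number of bicoloured cycles that $\mathbf e$ and $\mathbf f$ share; it is tied to the size of the colour set of the ambient graph. For the $(n+1)$-coloured graph $\G$ the two types are $\rho_n$ (all $n$ cycles shared) and $\rho_{n-1}$ (exactly one ``bad'' colour $d$). For the $n$-coloured residue $\G_{\hat\imath}$ the \emph{top} type is therefore $\rho_{n-1}$: it means $\G_{\{c,j\}}(\mathbf e)=\G_{\{c,j\}}(\mathbf f)$ for every $j\in\Delta_n\setminus\{i,c\}$, with \emph{no} ``not involved'' colour left over. What you wrote down---a colour $d\in\Delta_n\setminus\{i,c\}$ with $\G_{\{c,d\}}(\mathbf e)\neq\G_{\{c,d\}}(\mathbf f)$---is a $\rho_{n-2}$-pair of $\G_{\hat\imath}$, a strictly weaker object which the theorem does not mention. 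Your second subcase then genuinely fails: if both the $\{c,d\}$- and the $\{c,i\}$-cycles through $\mathbf e$ and $\mathbf f$ differ, only $n-2$ cycles are shared in $\G$, and $R$ is neither a $\rho_n$- nor a $\rho_{n-1}$-pair of $\G$.

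Once the definition is read correctly the argument collapses to the paper's: a $\rho_{n-1}$-pair of $\G_{\hat\imath}$ already shares all $\{c,j\}$-cycles for $j\neq i,c$, so in $\G$ it is a $\rho_n$-pair if the $\{c,i\}$-cycles also agree and a $\rho_{n-1}$-pair (not involving $i$) otherwise. Conversely, a $\rho_n$-pair of $\G$ is a $\rho_{n-1}$-pair of every $\G_{\hat\imath}$ with $i\neq c$, and a $\rho_{n-1}$-pair of $\G$ not involving $d$ is a $\rho_{n-1}$-pair of $\G_{\hat d}$. Your detour through extra colours $d,e$ in the $\rho_n$ case, and the remark that ``a $\rho_n$-pair of $\G_{\hat e}$ is in particular a $\rho_{n-1}$-pair'', stem from the same confusion and should be dropped.
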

\begin{proof} Suppose that $(\G,\g)$ is rigid and that there is
a colour $i \in \Delta_n$ such that $\G_{\i}$ has a
$\rho_{n-1}$-pair $R=(\bf e,\bf f)$ of colour $c \in \Delta_n -
\{i\}$. Then $R$ is a $\rho$-pair in $\G$ too, and $\G$ can't be
rigid.

\smallskip

Conversely, If for each $i \in \Delta_n$, $(\G)_{\hat{\imath}}$
contains no $\rho_{n-1}$-pairs, but $(\G,\g)$ isn't rigid, then $\G$
contains at least a $\rho$-pair $R=(\bf e,\bf f)$.

If $R$ is a $\rho_{n}$-pair, then $R$ is a $\rho_{n-1}$-pair in
$(\G)_{\hat{\imath}}$, for each $i \in \Delta_n$.

If $R$ is a $\rho_{n-1}$-pair, and $d$ is the non-involved colour,
then $R$ is a $\rho_{n-1}$-pair in $\G_{\hat{d}}$.
\end{proof}

\begin{theorem}\label{rigcrist} Every closed, connected, handle-free
$n$-manifold $M^n$, $n \geq 3$, admits a rigid crystallization.

Moreover, if $(\G,\g)$ is a crystallization of a closed, connected,
handle-free $n$-manifold $M^n$ of order $p$, then there exists a rigid
crystallization of $M^n$ of order $\le p$.
\end{theorem}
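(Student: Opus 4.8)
The plan is to argue by strong induction on the order $p$ of the given crystallization $(\G,\g)$ of $M^n$; equivalently, to show that a crystallization of $M^n$ of least possible order is automatically rigid. From this both assertions follow at once: the first because $M^n$ admits crystallizations at all (Pezzana; see \cite{FGG}), and the second because a crystallization of least order has order $\le p$. So let $(\G,\g)$ be a crystallization of the handle-free manifold $M^n$, of order $p$. If $\G$ is rigid there is nothing to prove; otherwise it contains a $\rho$-pair $R=({\bf e},{\bf f})$, with $\g({\bf e})=\g({\bf f})=c$, and I want to manufacture from it a crystallization of $M^n$ of order strictly less than $p$, so that the inductive hypothesis finishes the job.

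First I would rule out the possibility that $R$ is a $\rho_n$-pair. Indeed, by Theorem \ref{switchrn} --- in particular by its concluding clause --- switching a $\rho_n$-pair in a \emph{crystallization} of $M^n$ always produces a connected gem of some $n$-manifold $M'$ with $M^n\cong M'\#\mathbb H$, $\mathbb H$ a handle; as $M^n$ is handle-free this is impossible. Hence $R$ is a $\rho_{n-1}$-pair, involving colour $c$ and not involving some colour $d\neq c$. Next I would perform the standard switching of $R$ described in Section~3 (Case (A) if $M^n$ is orientable, so that $\G$ is bipartite; Case (B$_1$) if $M^n$ is non-orientable), obtaining a graph $(\G',\g')$. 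By Theorem \ref{switchrn-1}, $\G'$ is again a gem of $M^n$, and it plainly has the same order $p$ as $\G$.

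The crucial point --- the one I expect to take the most care --- is that $\G'$ is \emph{no longer contracted}. Since $\G$ is contracted, the graph $\G_{\hat d}$ is a single residue $\Xi$; it is a gem of $\mathbb S^{n-1}$, hence (as $\mathbb S^{n-1}$ is orientable for $n\ge 3$) bipartite, and since $c\neq d$ it contains both ${\bf e}$ and ${\bf f}$. Moreover, because $\G_{\{c,i\}}({\bf e})=\G_{\{c,i\}}({\bf f})$ for every colour $i\neq c,d$, while $\Xi_{\{c,i\}}=\G_{\{c,i\}}$ for such $i$, the edges ${\bf e},{\bf f}$ lie in the same $\{c,i\}$-cycle of $\Xi$ for \emph{every} colour $i$ of $\Xi$ other than $c$; that is, $R$ is a $\rho_{n-1}$-pair of the $n$-coloured graph $\Xi$ in the sense relevant to Lemma \ref{1n}. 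The switching of $R$ touches only $c$-coloured edges, all of which lie in $\Xi$, so $(\G')_{\hat d}$ is exactly the graph obtained from $\Xi$ by the corresponding switching of $R$; and in both Case (A) and Case (B$_1$) that switching restricts, on the colours $\neq d$, to the unique bipartiteness-preserving (standard) switching of $R$ in $\Xi$. By Lemma \ref{1n}, step $n-1$ (available since $n\ge 3$, hence $n-1\ge 2$), this switching splits $\Xi$ into two connected components, so that $g_{\hat d}(\G')=2>1=g_{\hat d}(\G)$ and $\G'$ is not contracted.

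To conclude, I would invoke the standard fact that a connected, non-contracted gem of $M^n$ can always be reduced to a crystallization of $M^n$ by a finite sequence of dipole cancellations, each of which strictly lowers the order and preserves the represented manifold (cf.\ \cite{FGG}): concretely, any $d$-coloured edge of $\G'$ joining its two $\hat d$-residues is a $1$-dipole, and after cancelling it one iterates --- cancelling a $1$-dipole for whichever colour still has a disconnected residue --- until the graph becomes contracted. Applied to $\G'$ this yields a crystallization $(\G'',\g'')$ of $M^n$ whose order is $\le p-2<p$, and the inductive hypothesis, applied to $\G''$, then produces a rigid crystallization of $M^n$ whose order is at most that of $\G''$, hence $<p$ and a fortiori $\le p$; in the minimal-order phrasing this contradicts the minimality of $p$, so the least-order crystallization was rigid to begin with. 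Besides the de-contraction argument above, the remaining steps --- excluding $\rho_n$-pairs via handle-freeness, and running the reduction with the correct monotonicity of the order --- are routine.
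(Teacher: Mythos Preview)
Your proof is correct and follows the same core strategy as the paper: switch a $\rho_{n-1}$-pair (Theorem~\ref{switchrn-1}), observe via Lemma~\ref{1n} (step $n-1$) that the $\hat d$-residue splits in two, cancel the resulting $1$-dipole to lower the order, and iterate. The one notable difference is in the treatment of $\rho_n$-pairs: you dispose of them at the outset by appealing to the last clause of Theorem~\ref{switchrn} together with handle-freeness, whereas the paper runs the same switching-and-dipole argument in that case as well, noting that every $\hat\imath$-residue ($i\neq c$) splits. Your version is cleaner, because it makes explicit exactly where the handle-free hypothesis enters and avoids having to justify that the switched graph still represents $M$ in the $\rho_n$ case (which, by Theorem~\ref{switchrn}, it would not).
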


\begin{proof} Starting from any gem of $M^n$ by cancelling a suitable number of $1$-dipoles,
we always can obtain a crystallization of $M^n$ ( see \cite
{FGG}). Suppose now that $\G$ is a crystallization of $M^n$; if $\G$
is rigid, then it is the request crystallization.

If $\G$ has some $\rho_{n-1}$-pair $R=(\bf e, \bf f)$, of colour $c
\in \Delta_n$ and non involving colour $d \in \Delta_n \setminus
\{c\}$, then consider the connected component $\Xi$ of $\G_{\hat d}$
containing both $\bf e$ and $\bf f$. Since $M^n$ is a manifold,
$\Xi$ represents $\mathbb S^{n-1}$ and $R$ is a  $\rho_{n-1}$-pair
in $\G_{\hat d}$, again. For Lemma \ref{1n}, by switching $R$ in
$\G_{\hat d}$, we obtain two connected components, both representing
$\mathbb S^{n-1};$ since $\G_{\hat d}$ is connected (Theorem
\ref{n.c.c.}), then there is at least a $1$-dipole in $\G_{\hat d}$,
whose cancellation reduces the vertex-number.

If $\G$ has some $\rho_{n}$-pair $R=(\bf e, \bf f)$, of colour $c
\in \Delta_n$, then, for each colour $i \in \Delta_n \setminus
\{c\}$, the connected component of $\G_{\i}$ containing $\bf e$ and
$\bf f$, represents $\mathbb S^{n-1}$ and $R$ is a $\rho_{n-1}$-pair
in $\G_{\i}$, as before, by switching $R$ in $\G_{\i}$, we obtain
two connected components, both representing $\mathbb S^{n-1};$ since
$\G_{\i}$ is connected (Theorem  \ref{n.c.c.}), then there is at
least a $1$-dipole in $\G_{\i}$, whose cancellation reduces the
vertex-number, for each $i \in \Delta_n \setminus \{c\}$.
\end{proof}

\medskip

Note that the minimal crystallizations of $\mathbb S^{n-1} \times \mathbb S^1$ and
$\mathbb S^{n-1} \tilde \times \mathbb S^1$ are not rigid (see, e.g., \cite{GV}).
Hence the second statement of Theorem \ref{rigid} is false for handles.

In dimension $3$, there exist rigid crystallizations for $\mathbb
S^{2} \times \mathbb S^1$ and $\mathbb S^{2} \tilde \times \mathbb
S^1$. The minimal ones have order $20$ for $\mathbb S^{2} \times
\mathbb S^1$ and order $14$ for $\mathbb S^{2} \tilde \times \mathbb
S^1$.

For $n>3$, it is easy to construct a rigid crystallization of
$\mathbb S^{n-1} \times \mathbb S^1$, if $n$ is even, and of
$\mathbb S^{n-1} \tilde \times \mathbb S^1$, if $n$ is odd,  both of order $2(2^n-1).$

The remaining cases are still open.

\medskip

\bigskip
\bigskip

\par \noindent Dipartimento di Matematica Pura ed Applicata,
\par \noindent Universit\`a di Modena e Reggio Emilia,
\par \noindent Via Campi 213 B,
\par \noindent I-41100 MODENA (Italy)
\par \noindent {\it e-mail:  \/} carlo.gagliardi@unimore.it \, \, \, \,  paola.bandieri@unimore.it

\end{document}